\def\black{\color{black}}
\DeclareMathOperator{\Hes}{\rm Hes}
\DeclareMathOperator{\tr}{\rm tr}
\newcommand{\nf}{\nabla f}
\newcommand{\KN}{\mathbin{\bigcirc\mspace{-15mu}\wedge\mspace{3mu}}}
\newtheorem{theorem}{Theorem}[section]
\newtheorem{lemma}[theorem]{Lemma}
\newtheorem{corollary}[theorem]{Corollary}
\newtheorem{example}[theorem]{Example}
\theoremstyle{definition}
\newtheorem{definition}[theorem]{Definition}
\theoremstyle{remark}
\newtheorem{remark}[theorem]{Remark}
\newcommand\restr[2]{{
  \left.\kern-\nulldelimiterspace 
  #1 
  \vphantom{\big|} 
  \right|_{#2} 
  }}
\begin{document}
\title[Rigidity of weighted Einstein SMMS]{Rigidity of weighted Einstein smooth metric measure spaces}
\author{M. Brozos-V\'azquez, D. Moj\'on-\'Alvarez}
\address{MBV: CITMAga, 15782 Santiago de Compostela, Spain}
\address{\phantom{MBV:}  
	Universidade da Coru\~na, Campus Industrial de Ferrol, Department of Mathematics, 15403 Ferrol,  Spain}
\email{miguel.brozos.vazquez@udc.gal}
\address{DMA: CITMAga, 15782 Santiago de Compostela, Spain}
\address{\phantom{DMA:}
	University of Santiago de Compostela,
15782 Santiago de Compostela, Spain}
\email{diego.mojon@rai.usc.es}
\thanks{Partially supported by projects PID2019-105138GB-C21(AEI/FEDER, Spain) and ED431F 2020/04 (Xunta de Galicia, Spain); and by contract FPU21/01519 (Ministry of Universities, Spain).}
\subjclass[2020]{ 53C21, 53B20, 53C24, 53C25.}
\date{}
\keywords{Smooth metric measure space, Bakry-\'Emery Ricci tensor, weighted Einstein manifold, weighted Weyl tensor, warped product}

\maketitle

\begin{abstract} 
We study the geometric structure of weighted Einstein smooth metric measure spaces with weighted harmonic Weyl tensor. A complete local classification is provided, showing that either the underlying manifold is Einstein, or  decomposes as a warped products in a specific way. Moreover, if the manifold is complete, then it either is a weighted analogue of a space form, or it belongs to a particular family of Einstein warped products.
\end{abstract}


\section{Introduction}\label{sec:intro}

Let $(M,g)$ be a connected Riemannian manifold and $f$ a density function on $M$, which defines the smooth measure $e^{-f}dvol_g$ and gives rise to a smooth metric measure space. A natural problem is to understand how this change of measure should affect the  geometric features of the manifolds when analyzed from a weighted perspective.

Much of this study has been based on the $m$-Bakry-\'Emery Ricci tensor
\begin{equation}\label{eq:Bakry-Emery-Ricci-tensor}
	\rho^m_f=\rho+\operatorname{Hes}_f-\frac{1}{m} df\otimes df,
\end{equation}
where $\rho$ is the usual Ricci tensor of $M$ and $\Hes_f$ refers to the Hessian tensor of the density function (see \cite{Lott} and references therein for some geometric properties of this tensor). Although it was introduced in relation to diffusion processes \cite{Bakry-Emery}, it gave rise to the notion of quasi-Einstein manifolds, which has been extensively studied (see, for example, \cite{Case-Shu-Wei,Catino-GQE,Catino-QE}). Recent works such as \cite{Hua-Wu, wei-wylie} consider bounded Bakry-Émery Ricci tensors to extend gap theorems or to obtain topological restrictions.
 This tensor also appears in Riemannian signature linked to the study of the static perfect fluid Einstein equation \cite{Kobayashi}; and, in the case $m=\infty$, in Perelman's work on the Ricci flow \cite{Perelman}. 
 
 In recent years, however, new {\it weighted} objects have been established, in order to reflect different aspects of the influence of the distinguished measure on the geometry of the associated smooth metric measure space. For example, weighted Yamabe constants are studied in \cite{Case-JDG} in relation to Sobolev inequalities, and weighted analogues of $\sigma_k$-curvatures are introduced in \cite{Case-adv-2016} and further analyzed in \cite{Case-Sigmak}.  In this paper, we aim to further our understanding of smooth metric measure spaces through rigidity results involving the weighted tensors introduced by Case \cite{Case-Tractors,Case-Sigmak}; and to shine light on some fundamental differences between the geometric properties of the usual curvature-related tensors and their weighted counterparts.

Within this framework, a \emph{smooth metric measure space} (\emph{SMMS} for short) is a five-tuple $(M^n,g,f,m,\mu)$, where $(M^n,g)$ is an $n$-dimensional Riemannian manifold (we will consider $n\geq 3$), $f \in C^{\infty}(M)$ is the {\it density function}, which we assume to be  non-constant, $m\in \mathbb{R^+}$ is a dimensional parameter and $\mu\in \mathbb{R}$ is an auxiliary curvature parameter.  We assume $M$ is connected for simplicity, but if it were not then every result should be applied to each connected component. We say that two SMMSs $(M_1^n,g_1,f_1,m_1,\mu_1)$ and $(M_2^n,g_2,f_2,m_2,\mu_2)$ are {\it identified} if there exists an isometry $\psi:(M_1,g_1)\to (M_2,g_2)$ such that $f_1=f_2\circ \psi$, $m_1=m_2$ and $\mu_1=\mu_2$.
In Section~\ref{sec:weighted-geometry}, we delve deeper into the nature of the parameters $m,\mu$ and the meaning of the weighted tensors in the study of SMMSs, particularly as conformal objects. Nevertheless, it is convenient to first establish some of the weighted objects and definitions of interest in our article. These are based (except for the placement of some constants) on those proposed by Case in \cite{Case-Sigmak}. For any SMMS, its {\it weighted scalar curvature} is
\begin{equation}\label{eq:Weighted-scalar-curvature}
	\tau^m_f=\tau+2\Delta f-\frac{m+1}{m}||\nf||^2+m(m-1)\mu \,e^{\frac{2}m f},
\end{equation}
where $\Delta f$ is the Laplacian of $f$ and $||\nf||^2=g(\nf,\nf)$. The {\it weighted Schouten tensor} and {\it weighted Schouten scalar} are given, respectively, by
\begin{equation}\label{eq:Weighted-Schouten-tensor}
	P^m_f=\frac{1}{n+m-2}(\rho^m_f-J^m_f g), \qquad J^m_f=\frac{1}{2(n+m-1)}\tau^m_f.
\end{equation}
Although $\tau_f^m$ is regarded as a weighted analogue of the usual scalar curvature $\tau$, it is not the trace of the Bakry-\'Emery Ricci tensor \eqref{eq:Bakry-Emery-Ricci-tensor}. Moreover, $J^m_f$ is not the trace of $P^m_f$, as opposed to the usual Schouten tensor $P=\frac{1}{n-2}\left(\rho-J g\right)$, where $J=\operatorname{Tr}P=\frac{\tau}{2(n-1)}$. The difference between these two quantities is denoted by $Y^m_f=J^m_f-\tr P^m_f$ and will also play a role. 

Following this pattern of generalization of Riemannian objects to the weighted setting, a natural step is finding a suitable analogue to Einstein manifolds, which gives rise to the notion of weighted Einstein manifolds.

\begin{definition} \cite{Case-Sigmak}
	A {\it weighted Einstein manifold} is a smooth metric measure space $(M^n,g,f,m,\mu)$ such that $P^m_f=\lambda g$ for some $\lambda\in \mathbb{R}$.
\end{definition}

Weighted Einstein manifolds are the main focus of this work. We will see in Theorem~\ref{th:analytic_solutions} that the underlying metric and the density function of weighted Einstein SMMSs are real analytic in harmonic coordinates. Moreover, they are particular cases of {\it generalized quasi-Einstein (GQE) manifolds} \cite{Catino-GQE}, i.e. four-tuples $(M^n,g,f,m)$ such that $\rho_f^m=\alpha g$ for some $\alpha\in C^\infty(M)$. Indeed, a weighted Einstein manifold is GQE with 
	\begin{equation}\label{eq:constant-wE}
	\alpha=(n+m-2)\lambda+J_f^m=(2n+m-2)\lambda+Y_f^m.
	\end{equation}
	Generalized quasi-Einstein manifolds have been extensively studied in literature. For example, under conditions such as the harmonicity of the Weyl tensor, classification results for GQE manifolds have been found in both Riemannian \cite{Catino-GQE} and Lorentzian \cite{Brozos-Iso-GQE} signature. However, we aim to further our understanding of SMMSs by imposing appropriate conditions on the weighted tensors themselves, rather than on their unweighted analogues; and studying how they affect the geometry of the underlying manifold, not only as a SMMS, but also as a purely Riemannian object. 
	
	To that end, we consider the {\it weighted Weyl tensor}
\begin{equation}\label{eq:Weighted-Weyl-tensor}
	W^m_f=R-P^m_f\KN g,
\end{equation}
where $\KN$ denotes the Kulkarni-Nomizu product. We transfer the harmonicity condition of the Weyl tensor used in \cite{Brozos-Iso-GQE,Catino-GQE} to our weighted setting as the {\it weighted harmonicity} of the weighted Weyl tensor, $0=\delta_f W^m_f=\delta W^m_f-\iota_{\nf} W^m_f$. Here, $\delta$ is the usual divergence, and $\iota$ stands for an interior product (see Section~\ref{sec:weighted-geometry} for details). For brevity, when a SMMS satisfies $\delta_f W^m_f=0$, we will say that it has {\it weighted harmonic Weyl tensor}. Finally, we will also need the {\it weighted Cotton tensor}
\begin{equation}\label{eq:Weighted-Cotton-tensor}
	dP^m_f(X,Y,Z)=(\nabla_XP^m_f)(Y,Z)-(\nabla_YP^m_f)(X,Z).
\end{equation}

We will thus focus on finding rigidity results for weighted Einstein manifolds such that $\delta_f W^m_f=0$. In the unweighted context, when working with Einstein manifolds, the condition $\delta W=0$ does not provide any additional information, since  all Einstein manifolds have harmonic Weyl tensor (moreover, they have harmonic curvature tensor). Nevertheless, for SMMS, the condition $\delta_f W^m_f=0$ does not follow from $P^m_f=\lambda g$, (see Examples~\ref{ex:wenonharm1} and \ref{ex:wenonharm2}). Therefore, it is pertinent to wonder to what extent this weighted harmonicity condition restricts the geometry of weighted Einstein manifolds.

\subsection{Main results}

Our aim is to characterize the geometric structure of weighted Einstein manifolds with weighted harmonic Weyl tensor. We will consider manifolds of dimension $n\geq 3$, and arbitrary values for the dimensional and curvature parameters $m$ and $\mu$. The following family of examples will play a role, as we will see subsequently. 

\begin{example}\label{ex:not-Einstein}\rm
	Take a SMMS of the form $(I\times_\varphi N,g,f,\frac{1}2,0)$, where $I\times_\varphi N$ is a warped product of an open interval $I\subset \mathbb{R}^+$ and a Ricci flat manifold $N$. Now, set the warping and density functions
	\[\varphi(t)=A(Bt)^{\frac{1}{n-1}}, \quad f(t)=-\log(Bt),\]
	where $t$ is the natural coordinate in $\mathbb{R}^+$ and $A,B\in \mathbb{R}^+$. The resulting  examples of SMMSs  satisfy $P^{1/2}_f=0$ and $\delta_fW_f^{1/2}=0$,  so they are weighted Einstein and have weighted harmonic Weyl tensor. However, the scalar curvature of the underlying manifold is non-constant, $\tau=\frac{(n-2)}{(n-1)t^2}$, therefore they are not Einstein. 
	
	In particular, if $N$ is the usual flat Euclidean space $\mathbb{R}^{n-1}$  with coordinates $(x_1,\dots,x_{n-1})$, the weighted Einstein tensor $W_f^{1/2}$ presents the following non-zero components (up to symmetries):
		\[
			\begin{array}{rcl}
 				W_f^{1/2}(\partial_t,\partial_{x_i},\partial_t,\partial_{x_i})&=&\frac{ (n-2)\varphi(t)^2}{(n-1)^2t^2},
 				\\ \noalign{\medskip}
				 \quad W_f^{1/2}(\partial_{x_i},\partial_{x_j},\partial_{x_i},\partial_{x_j})&=&-\frac{\varphi(t)^4}{(n-1)^2t^2}
				 , \quad i\neq j.
			\end{array}
	\]
	Note that $(I\times_\varphi N,g)$ is an incomplete manifold, and it cannot be isometrically embedded in any complete manifold (see Lemma~\ref{lemma:Ricci-blowup}).
\end{example}


The relevance of Example~\ref{ex:not-Einstein} becomes clear from the following theorem, which states that, around regular points of $f$, any non-Einstein weighted Einstein SMMS with weighted harmonic Weyl tensor is given by this example. Moreover, since $f$ is real analytic in harmonic coordinates, the set of its regular points is dense in $M$ (see Remark~\ref{remark:open-dense}). Thus, the following result actually determines the local geometric features of an open dense subset of these SMMSs.

\begin{theorem}\label{th:main-local-result}
	Let $(M^n,g,f,m,\mu)$ be a SMMS such that $P_f^m=\lambda g$ and $\delta_fW_f^m=0$. Then, for each regular point $p$ of $f$, there exists a neighborhood $\mathcal{U}$ of $p$ which is isometric to a warped product $I\times_\varphi N$, where $I\subset \mathbb{R}$ is an open interval, $N$ is an $(n-1)$-dimensional Einstein manifold, and $\nf$ is tangent to $I$. Moreover, one of the following conditions holds:
	\begin{enumerate}
		\item{ $I\times_\varphi N$ is Einstein with $\rho=2(n-1)\lambda g$.}
		\medskip
		\item{$(\mathcal{U},\restr{g}{U},\restr{f}{U},m,\mu)$ is identified with $(I\times_\varphi N,g,f,\frac{1}2,0)$ as given in Example~\ref{ex:not-Einstein}.}
	\end{enumerate}
\end{theorem}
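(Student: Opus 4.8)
The plan is to work locally near a regular point $p$ of $f$, where $\nabla f \neq 0$, so that the level sets of $f$ foliate a neighborhood and the integral curves of $\nabla f / \|\nabla f\|$ give a distinguished direction. The first step is to show that the weighted harmonicity condition $\delta_f W^m_f = 0$, together with $P^m_f = \lambda g$, forces the weighted Schouten tensor (which is just $\lambda g$) to be concircular-type along $\nabla f$ in the classical sense; more precisely, I would compute the weighted Cotton tensor $dP^m_f$ and use the standard identity relating $\delta_f W^m_f$ to $dP^m_f$ and to the traced second Bianchi-type identity in the weighted setting. Since $P^m_f = \lambda g$, its covariant derivative vanishes, so $dP^m_f = 0$ trivially; hence $\delta_f W^m_f = 0$ reduces to an algebraic-differential condition on $R$, $\nabla f$ and $\mathrm{Hes}_f$. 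This should say that the Hessian of $f$ is "pure" — that is, $\mathrm{Hes}_f = a\, g + b\, df \otimes df$ on the orthogonal complement of $\nabla f$, i.e. the level sets are totally umbilic — which is exactly the infinitesimal condition for a local warped product decomposition $I \times_\varphi N$ with $\nabla f$ tangent to $I$.

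Granting the warped product structure $\mathcal{U} \cong I \times_\varphi N$ with $f = f(t)$ and $\nabla f$ tangent to $I$, the second step is to translate $P^m_f = \lambda g$ and $\delta_f W^m_f = 0$ into ODEs for $\varphi(t)$ and $f(t)$ and into a condition on the fiber $N$. Using the standard warped product curvature formulas (e.g. $\rho = -(n-1)\frac{\varphi''}{\varphi} g_I$ on the base direction, $\rho = \rho_N - (\varphi\varphi'' + (n-2)(\varphi')^2) g_N$ on the fiber, and $\mathrm{Hes}_f$ expressed through $f''$, $f'$, $\varphi$, $\varphi'$), the equation $P^m_f = \lambda g$ splits into a "base" component and a "fiber" component. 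The fiber component, after subtracting, shows $\rho_N = \kappa\, g_N$ for a constant $\kappa$, so $N$ is Einstein; this already gives the first sentence of the theorem. The remaining two scalar ODEs (base and fiber) in $\varphi, f$ are the core system to analyze.

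The third step is the case analysis that produces the dichotomy. If $f$ is such that the underlying manifold happens to be Einstein, then $\rho = c\, g$; feeding this back into $P^m_f = \lambda g$ and \eqref{eq:constant-wE} pins down $c = 2(n-1)\lambda$, giving case (1). Otherwise, the scalar curvature $\tau$ is non-constant, and I would use $\delta_f W^m_f = 0$ — which is genuinely extra information here, since it does not follow from $P^m_f = \lambda g$ — to get the additional ODE that closes the system. Solving the resulting overdetermined ODE system for $\varphi$ and $f$ should force, after normalization, $m = \tfrac12$, $\mu = 0$, $\varphi(t) = A(Bt)^{1/(n-1)}$, $f(t) = -\log(Bt)$, and $N$ Ricci-flat (i.e. $\kappa = 0$), which is precisely Example~\ref{ex:not-Einstein}; this is case (2). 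One must also check that $m=\tfrac12$, $\mu=0$ are genuinely forced and not just one branch among several — presumably the other branches collapse into the Einstein case or are excluded by requiring $f$ non-constant and $\nabla f \neq 0$.

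The main obstacle I expect is the second step: extracting the warped product structure cleanly from $\delta_f W^m_f = 0$. Unlike the classical case $\delta W = 0$ for Einstein manifolds (which is automatic), here the weighted harmonicity is a real constraint, and one must carefully manage the interior product $\iota_{\nabla f}$ and the fact that $J^m_f$ and $\mathrm{tr}\, P^m_f$ differ by $Y^m_f$, so the usual "Schouten tensor with one eigenvalue of multiplicity $n-1$" argument needs a weighted correction. Concretely, one needs that the operator associated to $\mathrm{Hes}_f$ commutes appropriately and that the $\nabla f$-orthogonal distribution is umbilic with the umbilicity factor depending only on $t$ — and squeezing this out of the divergence identity, while keeping track of all the weighted correction terms involving $e^{2f/m}$ and the parameter $\mu$, is where the real work lies. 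The subsequent ODE analysis, while tedious, is then essentially a finite computation.
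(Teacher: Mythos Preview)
Your proposal is correct and follows essentially the same route as the paper: derive a Hessian identity from $\delta_f W_f^m=0$ (the paper's Lemma~\ref{lemma:Main-Expression-1}) to obtain the warped-product splitting with Einstein fiber (Lemma~\ref{lemma:local-splitting}), translate the two conditions into an overdetermined ODE system in $\varphi$ and $f$ (Lemma~\ref{lemma:necsuf-ODE}), and then analyze it. The one refinement you should anticipate is that the paper, rather than attacking the ODE system head-on, recomputes a single component of $\delta_f W_f^m$ directly via the warped-product curvature formulas (Lemma~\ref{lemma:two-cases}) to produce the factored equation $(\varphi''+2\lambda\varphi)\bigl(\varphi f'+(n-1)\varphi'\bigr)=0$, which yields the Einstein/non-Einstein dichotomy at the outset and reduces the remaining ODE analysis to the single branch $\varphi=Ae^{-f/(n-1)}$.
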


\begin{remark}\label{remark:harmonic-Weyl}
Warped products of the form $I\times_\varphi N$ with $N$ Einstein have harmonic Weyl tensor  (see \cite[16.26(i)]{Besse}  and \cite{Gebarowski}). Hence, it follows from the local structure described in Theorem~\ref{th:main-local-result} and the density of regular points of $f$ that the underlying manifold of a SMMS which is weighted Einstein and has weighted harmonic Weyl tensor is not necessarily Einstein, but does have harmonic Weyl tensor in the unweighted sense.
\end{remark}

The Einstein case in Theorem~\ref{th:main-local-result} presents a fair level of rigidity and we are able to achieve a local classification around regular points of $f$ (see Section~\ref{sec:Einstein}). Unlike in the non-Einstein scenario, it is shown in Theorem~\ref{th:Einstein-case} that there exist local examples for all values of $\lambda$, $m$ and $\mu$.

Theorem~\ref{th:main-local-result} is local in nature. However, if the manifold is complete, the only examples are the weighted analogues of space forms (see Section~\ref{sec:weighted-geometry}) or a particular family of warped products with $\lambda<0$. This fact is stated in the following rigidity result.

\begin{theorem}\label{th:complete-global}
	Let $(M^n,g,f,m,\mu)$ be a complete SMMS with $P_f^m=\lambda g$ and $\delta_fW_f^m=0$. Then, $(M^n,g,f,m,\mu)$ is identified with one of the following spaces attending to the sign of $\lambda$:
	\begin{enumerate}
		\item{ \underline{$\lambda>0$:} the $m$-weighted $n$-sphere of constant sectional curvature $2\lambda$ (Example~\ref{ex:sphere}).}
		\smallskip
		\item{ \underline{$\lambda=0$:} the $m$-weighted $n$-Euclidean space (Example~\ref{ex:euclidean-space}).}
		\smallskip
		\item{ \underline{$\lambda<0$:}
		\smallskip
		\begin{enumerate}
			\item{the $m$-weighted $n$-hyperbolic space of constant sectional curvature $2\lambda$ (Example~\ref{ex:hyperbolic-space}), or}
			\smallskip
			\item{$(M,g)$ is an Einstein warped product $\mathbb{R}\times_\varphi N$, where $N$ is a Ricci flat complete manifold. In this case, there is a coordinate $t$ parameterizing $\mathbb{R}$ by arc length such that the warping and density functions take the forms
	\begin{center}
	$\varphi (t)= A e^{\sqrt{-2\lambda}\,t}, \qquad f(t)=-m\log\left(B+ AC( e^{\sqrt{-2\lambda}\,t}-1)\right)$,
	\end{center}
	 with $\mu=-2(B-AC)^2\lambda$ or $m=1$, for $A,B,C\in \mathbb{R}^+$ such that $AC\leq B$.}
		\end{enumerate}}
	\end{enumerate}	
\end{theorem}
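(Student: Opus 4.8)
The plan is to first use completeness to force the underlying manifold to be Einstein, and then to globalize the warped-product description and match it against a short list of models. Since $f$ is non-constant and, by Theorem~\ref{th:analytic_solutions}, real analytic in harmonic coordinates, its set of regular points $\Omega$ is open, dense and nonempty (Remark~\ref{remark:open-dense}). By Theorem~\ref{th:main-local-result}, each $p\in\Omega$ has a neighborhood isometric to a warped product $I\times_\varphi N$ with $N$ Einstein, which is either Einstein with $\rho=2(n-1)\lambda g$ or identified with an open piece of Example~\ref{ex:not-Einstein}. As being Einstein is an analytic condition on the connected manifold $M$, it holds everywhere as soon as it holds near one regular point; hence if $M$ were not Einstein, every $p\in\Omega$ would be of the second type and $M$ would contain an open set isometric to a piece of Example~\ref{ex:not-Einstein}, contradicting Lemma~\ref{lemma:Ricci-blowup}. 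Therefore $\rho=2(n-1)\lambda g$ throughout $M$.

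\emph{A concircular function.} As a weighted Einstein manifold, $(M,g,f,m)$ is generalized quasi-Einstein, $\rho^m_f=\alpha g$ with $\alpha\in C^\infty(M)$ as in~\eqref{eq:constant-wE}; subtracting $\rho=2(n-1)\lambda g$ and using~\eqref{eq:Bakry-Emery-Ricci-tensor} gives $\operatorname{Hes}_f-\tfrac1m\,df\otimes df=(\alpha-2(n-1)\lambda)\,g$, so $u:=e^{-f/m}>0$ is non-constant with $\operatorname{Hes}_u$ pointwise proportional to $g$. By the classical classification of complete manifolds carrying such a function (Tashiro; see also Kanai), $(M,g)$ is isometric either to $\mathbb{R}^n$ or $S^n$ with a rotationally symmetric metric---according to whether $u$ has one or two critical points---or to a warped product $\mathbb{R}\times_\varphi N$ in which $\nabla u$, hence $\nf$, is tangent to the $\mathbb{R}$-factor; since $M$ is Einstein, $N$ inherits an Einstein metric in the last case. (Equivalently, one may globalize the local warped products of Theorems~\ref{th:main-local-result} and~\ref{th:Einstein-case}, using completeness to control the ends.)

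\emph{Identifying the models.} In the rotationally symmetric cases, smoothness of the metric at a critical point of $u$ forces the fibre to be a round $(n-1)$-sphere and, since $\rho=2(n-1)\lambda g$, the radial profile to solve $\varphi''=-2\lambda\varphi$ with $\varphi(0)=0$, $\varphi'(0)=1$; this yields the round $S^n$ of curvature $2\lambda$ when $\lambda>0$, flat $\mathbb{R}^n$ when $\lambda=0$, and $\mathbb{H}^n$ of curvature $2\lambda$ when $\lambda<0$---cases (1), (2), (3a). In the genuine warped-product case $\mathbb{R}\times_\varphi N$, positivity of $\varphi$ on all of $\mathbb{R}$ together with $\varphi''=-2\lambda\varphi$ excludes $\lambda>0$ outright and forces $\varphi$ constant when $\lambda=0$, in which case the ODE for $f$ coming from $\operatorname{Hes}_f-\tfrac1m\,df\otimes df\propto g$ admits no globally smooth non-constant solution, so this subcase is empty; for $\lambda<0$, positivity leaves $\varphi\in\{A\cosh(\sqrt{-2\lambda}\,(t-t_0)),\,Ae^{\pm\sqrt{-2\lambda}\,t}\}$, the $\cosh$ profile again making $f$ non-smooth on $\mathbb{R}$, so $\varphi(t)=Ae^{\sqrt{-2\lambda}\,t}$ and the fibre equation forces $N$ Ricci flat. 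Integrating the resulting Bernoulli-type equation for $f'$ gives the stated expression for $f$, and feeding $g$ and $f$ into the remaining content of $P^m_f=\lambda g$---namely $J^m_f=\tfrac{1}{2(n+m-1)}\tau^m_f$ with $\tau^m_f$ as in~\eqref{eq:Weighted-scalar-curvature}---pins $\mu$ to $-2(B-AC)^2\lambda$ unless $m=1$. Matching constants identifies $(M^n,g,f,m,\mu)$ with the appropriate weighted space form (Examples~\ref{ex:sphere}, \ref{ex:euclidean-space}, \ref{ex:hyperbolic-space}) or with the family in (3b).

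\emph{Main difficulty.} The main obstacle is the passage from local to global combined with the simultaneous control of $f$ and $\mu$: turning the local warped products (valid only over $\Omega$) into a genuine global structure, ruling out the blow-up end of Example~\ref{ex:not-Einstein} on a complete manifold, establishing smoothness of the caps in the space-form cases so that the fibre is a round sphere, and disentangling the coupled ODEs for $\varphi$ and $f$ while keeping $\mu$ constant---which is exactly where the dichotomy ``$\mu=-2(B-AC)^2\lambda$ or $m=1$'' originates. The rotationally-symmetric-Einstein $\Rightarrow$ space-form step is standard, but must be invoked with care so that no compact Einstein warped product intrudes when $\lambda>0$.
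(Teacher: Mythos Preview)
Your proof is correct and follows essentially the same strategy as the paper: rule out the non-Einstein branch via Lemma~\ref{lemma:Ricci-blowup}, observe that $v=e^{-f/m}$ has Hessian proportional to $g$, invoke a global structure theorem for such functions, and then identify the models by ODE analysis. The only difference is cosmetic---the paper first pins down the precise equation $\operatorname{Hes}_v+(2\lambda v-\kappa)g=0$ (Lemma~\ref{eq:weighted-Einstein-Obata}) and cites the Wu--Ye generalized Obata theorem rather than Tashiro, and then leans on the explicit formulas of Theorem~\ref{th:Einstein-case} in the $\mathbb{R}\times_\varphi N$ case where you argue directly from $\varphi''=-2\lambda\varphi$.
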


Note that we could eliminate the logarithm in the density function in Theorems~\ref{th:main-local-result} and \ref{th:complete-global} by the change of variable $v=e^{-f/m}$. The choice of $v$ as a density is related to the interpretation of weighted objects as  their corresponding standard Riemannian  counterparts on certain warped products (we refer to Section~\ref{sec:weighted-geometry} and Remark~\ref{remark:multi-warped} for details).

\subsection{Outline of the paper}
The remaining of this paper is organized as follows.

In Section~\ref{sec:weighted-geometry}, we go over some preliminaries on the geometric significance of the weighted tensors we have defined in the introduction; and include an analyticity result for weighted Einstein manifolds. 

In Section~\ref{sec:WEWH-manifolds}, we begin by computing some geometric formulas and proving the local splitting of these SMMSs as warped products with Einstein fiber. Afterwards, we find necessary and sufficient conditions, in terms of an overdetermined system of ODEs, for a SMMS to satisfy both the weighted Einstein and the weighted harmonicity conditions. We use these ODEs to prove Theorem~\ref{th:main-local-result}. 

In Section~\ref{sec:Einstein}, we focus on the Einstein case that arises in Theorem~\ref{th:main-local-result}--(1). We describe both the warping and density functions $\varphi$ and $f$, as well as the Einstein constant of the fiber and the value of the parameter $\mu$, to obtain Theorem~\ref{th:Einstein-case}, which completes a classification result around regular points of $f$. In low dimensions, stronger rigidity results are provided in Corollary~\ref{cor:4-dim-curv}.

Finally, in Section~\ref{sec:global-results}, we prove Theorem~\ref{th:complete-global}, determining the only four families of complete weighted Einstein SMMSs with weighted harmonic Weyl tensor. 
 
\medskip

 The authors gratefully acknowledge valuable comments from J. Case.

\section{Weighted geometry in smooth metric measure spaces}\label{sec:weighted-geometry}

Before moving on with the analysis of weighted curvature-related tensors, it is convenient to fix notation and introduce all needed objects about SMMSs. On a Riemannian manifold $(M,g)$, let  $R(X,Y)=\nabla_{[X,Y]}-[\nabla_X,\nabla_Y]$ be the curvature tensor and define the Kulkarni-Nomizu product of two symmetric $(0,2)$-tensors, $T$ and $S$, as
\[
\begin{array}{rcl}
 	(T\KN S)(X,Y,Z,U)&=&\,T(X,Z)S(Y,U)+T(Y,U)S(X,Z) \\
	\noalign{\smallskip}
	&& -T(X,U)S(Y,Z)-T(Y,Z)S(X,U),
\end{array}
\]
where $X$, $Y$, $Z$ and $U$ are arbitrary vector fields. Let $(M^n,g,f,m,\mu)$ be a SMMS, i.e.,  an $n$-dimensional Riemannian manifold $(M^n,g)$ ($n\geq 3$) endowed with a non-constant density function $f \in C^{\infty}(M)$, a dimensional parameter $m\in \mathbb{R^+}$ and an auxiliary curvature parameter $\mu\in \mathbb{R}$. As mentioned in Section~\ref{sec:intro}, the geometry of a SMMS is best described in terms of {\it weighted objects}, which incorporate information about the density function while retaining suitable geometric meanings. For example, the {\it weighted volume element} $e^{-f}dvol_g$ represents a change in the way we measure volumes on the manifold. We also define the {\it weighted divergence} of a $k$-covariant tensor $T$ as $\delta_f T=\delta T-\iota_{\nf} T$, where $\delta$ is the usual divergence, given in an orthonormal frame $\{E_1,\dots,E_n\}$ by $\delta T(\cdots)=\sum_{i=1}^n(\nabla_{E_i}T)(E_i,\cdots)$,
and $\iota_{\nf}$ refers to the interior product $\iota_{\nf} T(\cdots)=T(\nf,\cdots)$.
This definition is motivated by the fact that this operator is (up to a sign) the formal adjoint of the exterior derivative on $k$-forms with respect to $e^{-f}dvol_g$ (see \cite{Case-Tractors}). We say that $T$ is {\it weighted harmonic if $\delta_f T=0$}.

By defining $v=e^{-f/m}$, one can equivalently describe a SMMS as a four-tuple $(M^n,g,v^mdvol_g, \mu)$, introducing the dimensional parameter $m$ implicitly into the distinguished measure. In the special case $m=0$, by convention, $f$ is assumed to be identically zero, so that SMMSs with $m=0$ reduce to usual Riemannian manifolds. In this work, we focus on proper SMMSs, hence why we only consider those with $m\in \mathbb{R}^+$. 

Many of the weighted objects defined in  Section~\ref{sec:intro} can be thought of as restrictions of the usual Riemannian features of the formal warped product
\begin{equation}\label{eq:formal-warped-product}
	M^n\times_v F^m(\mu)=(M^n\times F^m, g\oplus v^2 h (\mu)),
\end{equation}
where $(F^m,h (\mu))$ is the $m$-dimensional space form of sectional curvature $\mu$. Indeed, when the warped product makes sense, its scalar curvature is the weighted scalar curvature \eqref{eq:Weighted-scalar-curvature}; the weighted volume element is the restriction of its Riemannian volume element to $M$; and its Ricci tensor restricts to vectors tangent to $M$ as the $m$-Bakry-Émery Ricci tensor \eqref{eq:Bakry-Emery-Ricci-tensor}. Note that, for $m=1$, the fiber is one-dimensional and thus has no sectional curvature, so the auxiliary parameter $\mu$ becomes irrelevant. Since $\mu$ is not needed, SMMSs with $m=1$ can be denoted by a four-tuple  $(M^n,g,f,1)$.

On the other hand, the weighted Schouten and Weyl tensors are introduced in order to discuss the conformal properties of SMMS. Thus, a notion of conformal class in the weighted sense is needed. Following \cite{Case-Sigmak}, the SMMSs $(M^n,g,f,m,\mu)$ and $(M^n,\tilde{g},\tilde{f},m,\mu)$ are {\it pointwise conformally equivalent} if there exists a smooth function $u\in C^\infty(M)$ such that $\tilde{g}=e^{-2u/m}g$ and $\tilde{f}=f+ u$. In particular, a SMMS $(M^n,g,f,m,\mu)$ with $m\neq 1$ is {\it locally conformally flat in the weighted sense} if for each point there is a neighborhood so that the restriction of the SMMS is pointwise conformally equivalent to an open set of $(F^n,h(-\mu),0,m,\mu)$. In the distinguished case $m=1$, we say that $(M^n,g,f,1)$ is {\it locally conformally flat in the weighted sense} if for each point there exists a neighborhood so that the restriction of the SMMS is pointwise conformally equivalent to $(F^n,h(c),0,1)$ for any sectional curvature $c$. Equivalently, $(M^n,g,f,m,\mu)$ is locally conformally flat if the formal warped product \eqref{eq:formal-warped-product} is locally conformally flat.

The weighted Weyl tensor $W^m_f$ is a weighted conformal invariant of  a SMMS (see \cite{Case-Tractors}). Hence, in a weighted conformal class, the Riemann curvature tensor $R$ is controlled by the weighted Schouten tensor. Due to this fact, the weighted Weyl tensor is intimately related to the notion of local conformal flatness in the weighted sense. Indeed, a SMMS $(M^n,g,f,m,\mu)$ with $n\geq 3$ is locally conformally flat in the weighted sense if and only if  $W_f^m=0$. In this case, we also have $dP_f^m=0$, since both tensors are related by the weighted divergence in a similar (but not identical) manner to how the unweighted Weyl and Cotton tensors are related by the usual divergence. We refer to \cite{Case-Tractors,Case-Sigmak} for further details on the treatment of SMMS as conformal objects.

Going back to the main focus of our work, weighted Einstein manifolds are intimately related to quasi-Einstein ones, through a series of tensorial equations (see \cite{Case-Sigmak}) which give rise to the following key lemma.

\begin{lemma} \cite{Case-Sigmak} \label{lemma:scale}
	Let  $(M^n,g,f,m,\mu)$ be a SMMS such that $P^m_f=\lambda g$  for some $\lambda\in \mathbb{R}$. Then, there is a unique constant $\kappa\in \mathbb{R}$ (called {\it scale}) such that $J_f^m=(m+n)\lambda -m\kappa e^{\frac{f}m}$. Moreover, $(M^n,g,f,m)$ is quasi-Einstein (i.e. $\rho_f^m=\alpha g$ for some $\alpha\in \mathbb{R}$) if and only if, for an appropriate $\mu$, $(M^n,g,f,m,\mu)$ is weighted Einstein with $\kappa =0$.
\end{lemma}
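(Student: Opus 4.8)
The plan is to prove Lemma~\ref{lemma:scale} by exploiting the tensorial identities that relate the weighted Schouten tensor to the Bakry-\'Emery data, and then tracking how the trace-type quantities evolve along $M$. First I would recall the relevant structure equation from \cite{Case-Sigmak}: when $P^m_f=\lambda g$, the weighted Schouten scalar $J^m_f$ is not independent of $\lambda$ but satisfies a first-order constraint coming from the (weighted) contracted second Bianchi identity, or equivalently from the fact that $\rho^m_f-J^m_fg=(n+m-2)\lambda g$ forces $\rho^m_f=((n+m-2)\lambda+J^m_f)g$, so $(M^n,g,f,m)$ is generalized quasi-Einstein with potential function $\alpha=(n+m-2)\lambda+J^m_f$ as already noted in~\eqref{eq:constant-wE}. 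The key is then to differentiate: for GQE manifolds there is a standard identity (see \cite{Catino-GQE}) relating $\nabla\alpha$, $\nabla f$ and the function; combined with $P^m_f=\lambda g$ this should collapse to an ODE for $J^m_f$ along integral curves of $\nf$ whose general solution is exactly $J_f^m=(m+n)\lambda-m\kappa e^{f/m}$ for a single integration constant $\kappa$.

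More concretely, I would compute $dJ^m_f$ using the definition \eqref{eq:Weighted-Schouten-tensor} and the weighted analogue of the Schouten contracted Bianchi identity — something of the shape $\delta_fP^m_f+dJ^m_f=(\text{lower order in }W^m_f)$, which in fact Case derives — and observe that $P^m_f=\lambda g$ makes $\delta_fP^m_f$ vanish and kills the Cotton-type term. This leaves a relation of the form $dJ^m_f=(\text{const})\,e^{f/m}df$ (the $e^{f/m}$ arising from the $\mu e^{2f/m}$ term in $\tau^m_f$ and the way $\nf$ enters the weighted divergence), which integrates immediately to give the claimed expression with a unique constant $\kappa$; uniqueness is clear because two such formulas differing in $\kappa$ would force $e^{f/m}$ to be locally constant, contradicting that $f$ is non-constant (and $M$ is connected). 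For the moreover statement, I would simply read off from $\rho^m_f=\alpha g$ that being quasi-Einstein means $\alpha\in\mathbb{R}$ is constant; by~\eqref{eq:constant-wE} and the just-established formula, $\alpha$ constant is equivalent to the coefficient of $e^{f/m}$ vanishing, i.e. $\kappa=0$. Finally, one checks that, given $\kappa=0$, the parameter $\mu$ can be chosen (using the definition \eqref{eq:Weighted-scalar-curvature} of $\tau^m_f$) so that all the SMMS equations are consistent — this is the "appropriate $\mu$" in the statement — and conversely a weighted Einstein SMMS with $\kappa=0$ is quasi-Einstein by the same computation run backwards.

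The main obstacle I anticipate is producing and justifying the precise weighted Bianchi-type identity that forces the $e^{f/m}$-dependence: the usual contracted second Bianchi identity $\delta P + dJ = 0$ (for Einstein, trivially) has a genuinely different weighted form because $J^m_f\neq\tr P^m_f$ (the discrepancy $Y^m_f$ enters) and because $\delta_f$ rather than $\delta$ appears. One has to carefully combine (i) the relation between $W^m_f$, $P^m_f$ and $R$ in~\eqref{eq:Weighted-Weyl-tensor}, (ii) the weighted divergence identities from \cite{Case-Tractors,Case-Sigmak} linking $\delta_fW^m_f$ to $dP^m_f$, and (iii) the second Bianchi identity for $R$ itself, to isolate a clean scalar equation. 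Once that identity is in hand the rest is a one-line integration, so I would devote most of the write-up to stating that identity correctly (citing \cite{Case-Sigmak} for it if it is available there verbatim) and to the elementary ODE argument and the uniqueness/connectedness remark; the quasi-Einstein equivalence then follows formally from~\eqref{eq:constant-wE}.
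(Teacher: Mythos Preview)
The paper does not prove this lemma; it is stated with attribution to \cite{Case-Sigmak} and used as a black box. So there is no ``paper's own proof'' to compare against. That said, your strategy is sound and is in fact exactly the computation the authors carry out later, inside the proof of Lemma~\ref{lemma:Main-Expression-1}: they quote from \cite{Case-Tractors,Case-Sigmak} the trace identity $\mathrm{tr}(dP^m_f)=\iota_{\nf}P^m_f+dY_f^m-\frac{1}{m}Y_f^mdf$, observe that $P^m_f=\lambda g$ kills the Cotton term and makes $\iota_{\nf}P^m_f=\lambda\,df$, and obtain $dJ^m_f=dY^m_f=\bigl(\tfrac{1}{m}Y^m_f-\lambda\bigr)df$. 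Since $Y^m_f=J^m_f-n\lambda$, this is $d\bigl(J^m_f-(m+n)\lambda\bigr)=\tfrac{1}{m}\bigl(J^m_f-(m+n)\lambda\bigr)\,df$, whence $\bigl(J^m_f-(m+n)\lambda\bigr)e^{-f/m}$ is locally constant and, by connectedness, equals a single constant $-m\kappa$.

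One correction to your write-up: the scalar equation that drops out is \emph{not} of the form $dJ^m_f=(\text{const})\,e^{f/m}df$ with an a priori constant coefficient, and the exponential does not come from the $\mu\,e^{2f/m}$ term in $\tau^m_f$. The identity above is a first-order \emph{linear} ODE in $J^m_f$ with coefficient $\tfrac{1}{m}$; the factor $e^{f/m}$ appears only after integrating it. Your phrasing would suggest two free constants (the coefficient and an additive one), which muddies the uniqueness claim. For the ``moreover'' clause your argument is correct: $\alpha=(n+m-2)\lambda+J^m_f=2(n+m-1)\lambda-m\kappa e^{f/m}$ is constant iff $\kappa=0$; the ``appropriate $\mu$'' in the converse direction is the one that makes $J^m_f$ (which depends affinely on $\mu$ through $\tau^m_f$) equal to the required constant $(m+n)\lambda$, and its existence is the classical auxiliary-constant fact for quasi-Einstein metrics.
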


On the other hand, for a weighted Einstein manifold with $P_f^m=\lambda g$, the weighted Weyl tensor $W^m_f$ defined in \eqref{eq:Weighted-Weyl-tensor} takes the form
\begin{equation}\label{eq:weighted-weyl-Einstein}
	W_f^m(X,Y,Z,U)=R(X,Y,Z,U)-2\lambda\{g(X,Z)g(Y,U)-g(X,U)g(Y,Z)\}.
\end{equation}
Hence, it follows that for a SMMS that is weighted Einstein and locally conformally flat in the weighted sense, the underlying Riemannian manifold has constant sectional curvature $2\lambda$. Indeed, the simplest examples of weighted Einstein manifolds, which will be essential in our discussion of complete SMMSs, come from considering weighted analogues to the usual model space forms as follows. 

%

\begin{example}\rm\label{ex:sphere} 
	For $\lambda>0$, consider the $n$-sphere $S^n(2\lambda)$ of constant sectional curvature $2\lambda$ (equivalently, of radius $\frac{1}{\sqrt{2\lambda}}$). Take the usual round metric, written as a warped product with fiber the unitary sphere $S^{n-1}$:
	\[
		g_S^{2\lambda}=dt^2+(2\lambda)^{-1}\sin^2( \sqrt{2\lambda} t) g_{S^{n-1}}, \quad t\in \left(0,\frac{\pi}{\sqrt{2\lambda}}\right),
	\]
	which extends smoothly to the two poles of the sphere, $N$ and $-N$, where $t$ measures the geodesic distance from $N$.  Take $f_m(t)=-m\log (A+B\cos(\sqrt{2\lambda} t))$ for $A\in \mathbb{R}^+$, $B\in \mathbb{R}^*$ such that $A>|B|$, so that $v(t)=A+B\cos(\sqrt{2\lambda} t)$ is positive for all $t\in \left[0,\frac{\pi}{\sqrt{2\lambda}}\right]$ (note that, by continuity, $v(N)=A+B$ and $v(-N)=A-B$). For $m\neq 1$, fix $\mu=2\lambda(B^2-A^2)$. Then, $(S^n(2\lambda),g^{2\lambda}_S,f_m,m,\mu)$ and $(S^n(2\lambda),g^{2\lambda}_S,f_1,1)$ are complete SMMSs satisfying $P_f^m=\lambda g$ and have vanishing weighted Weyl tensor. We call this the {\it $m$-weighted $n$-sphere}. The scale of this SMMS is $\kappa=2\lambda A$.
	
	Note that, by removing the condition $A>|B|$, we also get incomplete examples defined on the open set of points where $v>0$. We present two notable particular examples (cf. \cite{Case-Sigmak}):
	\begin{enumerate}
		\item $A=B=1$ on the punctured sphere $S^n(2\lambda)-\{-N\}$ gives the {\it standard $m$-weighted n-sphere} of curvature $2\lambda$.
		\item $A=0$ and $B=1$ on the upper hemisphere $S^n_+(2\lambda)$ gives the {\it positive elliptic $m-Gaussian$}, which is a quasi-Einstein manifold since its scale vanishes (indeed, $J_f^m=\lambda(m+n)$ in this case).
	\end{enumerate}  
\end{example}

\begin{example}\rm\label{ex:euclidean-space} 
	For $\lambda=0$, consider the Euclidean $n$-space $\mathbb{R}^+$, given in spherical coordinates by the metric
	\[
	g_E=dt^2+t^2 g_{S^{n-1}}, \quad t\in (0,\infty),
	\]
	which extends smoothly to $t=0$. Take $f_m(t)=-m\log (A+Bt^2)$ for $A,B\in \mathbb{R}^+$. For $m\neq 1$, fix $\mu=-4AB$. Then, $(\mathbb{R}^n,g_E,f_m,m,\mu)$ and $(\mathbb{R}^n,g_E,f_1,1)$ are complete SMMSs satisfying $P_f^m= 0$ and have vanishing weighted Weyl tensor. We call this the {\it $m$-weighted $n$-Euclidean space}. The scale of this  weighted Einstein SMMS is $\kappa=2 B$ and there are no nontrivial (i.e. with $f$ not constant) quasi-Einstein manifolds of this type.
\end{example}

\begin{example}\rm\label{ex:hyperbolic-space} 
	For $\lambda<0$, consider the model of hyperbolic $n$-space of constant sectional curvature $2\lambda$, denoted by $H^n(2\lambda)$, given by the metric
	\[
		g^{2\lambda}_H=dt^2+(-2\lambda)^{-1}\sinh^2( \sqrt{-2\lambda} t) g_{S^{n-1}}, \quad t\in (0,\infty),
	\]
	which extends smoothly to $t=0$. Take $f_m(t)=-m\log (A+B\cosh(\sqrt{-2\lambda} t))$ for $B\in \mathbb{R}^+$, $A\in \mathbb{R}$ such that $A>-B$, so that $v(t)=A+B\cosh(\sqrt{-2\lambda} t)$ is positive for all $t\in [0,\infty)$. For $m\neq 1$, fix $\mu=2\lambda (B^2-A^2)$. Then, $(H^n(2\lambda),g^{2\lambda}_H,f_m,m,\mu)$ and $(H^n(2\lambda),g^{2\lambda}_H,f_1,1)$ are complete SMMSs satisfying $P_f^m=\lambda g$ and have vanishing weighted Weyl tensor. We call this the {\it $m$-weighted $n$-hyperbolic space}. The scale of this SMMS is $\kappa=2\lambda A$, so it is quasi-Einstein when $A=0$. Note that, in contrast to Example~\ref{ex:sphere}, taking $A=0$ results in a complete manifold. 
\end{example}


\subsection{Analyticity of  weighted Einstein manifolds.}

The analytic properties of the weighted Einstein equation allow for a further understanding of the geometric properties of the underlying Riemannian manifold of the SMMS. Indeed, by similar arguments to those used for Einstein and quasi-Einstein metrics (see \cite[5.26]{Besse} and \cite{He-warped-Einstein}, respectively), we can prove the real analyticity, in harmonic coordinates, of both the metric and the density function.

\begin{theorem}\label{th:analytic_solutions}
	Let $(M^n,g,f,m,\mu)$ be a weighted Einstein SMMS. Then, both $g$ and $f$ are real analytic in harmonic coordinates on $M$.
\end{theorem}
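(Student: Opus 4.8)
The plan is to recast the weighted Einstein condition as a determined, quasilinear, second-order elliptic system for the pair $(g,f)$ in harmonic coordinates, and then to invoke the classical analytic-regularity theorem for such systems --- the same strategy used for Einstein metrics in \cite[5.26]{Besse} and for quasi-Einstein metrics in \cite{He-warped-Einstein}.

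First I would unwind $P^m_f=\lambda g$ using Lemma~\ref{lemma:scale}. Since $P^m_f=\frac{1}{n+m-2}(\rho^m_f-J^m_f g)$, the weighted Einstein equation is equivalent to
\[
\rho+\operatorname{Hes}_f-\tfrac1m df\otimes df=\big(2(n+m-1)\lambda-m\kappa\, e^{f/m}\big)\,g,
\]
where $\kappa$ is the constant scale provided by Lemma~\ref{lemma:scale}; and substituting $J^m_f=(m+n)\lambda-m\kappa e^{f/m}$ into $J^m_f=\frac{1}{2(n+m-1)}\tau^m_f$, together with \eqref{eq:Weighted-scalar-curvature}, yields the scalar equation
\[
\tau+2\Delta f-\tfrac{m+1}{m}\|\nf\|^2+m(m-1)\mu\, e^{2f/m}=2(n+m-1)\big((m+n)\lambda-m\kappa e^{f/m}\big).
\]
The tensor equation ($\tfrac{n(n+1)}{2}$ scalar equations) together with this last equation form a determined system of $\tfrac{n(n+1)}{2}+1$ equations for the $\tfrac{n(n+1)}{2}+1$ unknowns $(g_{ij},f)$; it differs from the quasi-Einstein system treated in \cite{He-warped-Einstein} (which one recovers when $\kappa=0$, cf. Lemma~\ref{lemma:scale}) only by the zeroth-order term $-m\kappa e^{f/m}$.

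Next I would pass to $g$-harmonic coordinates $\{x^i\}$. There the classical identity $\rho_{ij}=-\tfrac12 g^{ab}\partial_a\partial_b g_{ij}+Q_{ij}(g,\partial g)$ holds, with $Q_{ij}$ polynomial in $\partial g$ and rational --- hence real analytic --- in the entries of $g$; likewise $(\operatorname{Hes}_f)_{ij}=\partial_i\partial_j f-\Gamma^k_{ij}\partial_k f$, $\Delta f=g^{ab}\partial_a\partial_b f$ and $\tau=g^{ij}\rho_{ij}$. Thus the system is quasilinear of second order with real-analytic coefficients, and its principal symbol at a covector $\xi\neq0$ sends a pair $(H,\phi)$, consisting of a symmetric bilinear form $H$ and a scalar $\phi$, to
\[
\Big(\tfrac12|\xi|_g^2\,H-\phi\,\xi\otimes\xi,\ \ \tfrac12|\xi|_g^2\operatorname{tr}_g H-2|\xi|_g^2\,\phi\Big),
\]
the first component combining the usual (elliptic) symbol of $\rho$ in harmonic coordinates with the contribution of $\operatorname{Hes}_f$, and the second combining the contribution of $\tau$ with that of the term $2\Delta f$ in \eqref{eq:Weighted-scalar-curvature}. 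If this pair vanishes, the first component gives $H=\tfrac{2\phi}{|\xi|_g^2}\,\xi\otimes\xi$, hence $\operatorname{tr}_g H=2\phi$, while the second gives $\operatorname{tr}_g H=4\phi$; therefore $\phi=0$ and $H=0$, so the symbol is an isomorphism and the system is elliptic. Morrey's theorem on the real-analyticity of solutions of elliptic systems with real-analytic coefficients (as used in \cite[5.26]{Besse} and \cite{He-warped-Einstein}) then shows that $g_{ij}$ and $f$ are real analytic in these coordinates, and since harmonic coordinates cover $M$ the theorem follows.

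The step I expect to be the main obstacle is precisely this ellipticity verification. The tensor equation $P^m_f=\lambda g$ is by itself underdetermined, so one must adjoin the correct companion scalar equation --- the constancy of the scale from Lemma~\ref{lemma:scale}, equivalently the relation for $\tau^m_f$ displayed above --- and then check that the coupling between the Hessian of $f$ in the tensor equations and the Laplacian of $f$ in the scalar equation does not spoil ellipticity. As the computation indicates, this hinges on the coefficient $2$ of $\Delta f$ in \eqref{eq:Weighted-scalar-curvature}: it renders the two trace conditions forced on $H$ incompatible unless $\phi=0$, whereas the bare trace of the Bakry--Émery tensor (which carries $\Delta f$, not $2\Delta f$) would give a degenerate symbol. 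A secondary, routine point is that harmonic coordinates depend on $g$; since $g$ and $f$ are assumed smooth, no preliminary elliptic bootstrapping is needed before applying the analytic regularity result.
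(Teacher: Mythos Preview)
Your proof is correct and follows the same overall strategy as the paper --- recast the weighted Einstein equation as a quasilinear second-order system in harmonic coordinates, verify ellipticity of the principal symbol, and invoke analytic regularity --- but your choice of companion scalar equation differs slightly and is in fact cleaner. The paper combines the trace of the tensor equation with the scale equation to eliminate $\tau$ and obtain a scalar equation whose principal part is just $\Delta f$; this yields a ``triangular'' symbol $(\,h,\omega)\mapsto(-\tfrac12\|\xi\|^2 h+\omega\,\xi\otimes\xi,\ \|\xi\|^2\omega)$ whose invertibility is immediate, at the cost of some algebra and a case split $m=1$ versus $m\neq 1$. You instead keep the scale equation in its raw form, with principal part $\tau+2\Delta f$, and check directly that the coupled symbol is still invertible --- which, as you observe, hinges precisely on the coefficient $2$ in front of $\Delta f$ in \eqref{eq:Weighted-scalar-curvature}. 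Your route avoids the case distinction on $m$ entirely, so it is marginally more elegant; the paper's route gives a symbol whose kernel computation is trivial. Both are valid executions of the same idea.
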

\begin{proof}
 Assume $P_f^m=\lambda g$ and take traces in this equation to obtain
\begin{multline*}
	(n+2m-2)\tau+2(m-1)\Delta f +\frac{(m-1)(n-2)}{m}||\nf||^2\\  - nm(m-1)\mu e^{\frac{2}mf}=2(n+m-1)(n+m-2)n\lambda.
\end{multline*}
Note that, if $m=1$, this becomes $\tau=2n(n-1)\lambda$, so $\tau$ is constant. If $m\neq 1$, we can write
	\begin{equation}\label{eq:laplacian_1}
			\begin{array}{rcll}
 				\Delta f&=&\frac{1}{2(m-1)}(-(n+2m-2)\tau+2(n+m-1)(n+m-2)n\lambda) \\
				\noalign{\medskip}
				&&-\frac{n-2}{2m}||\nf||^2+\frac{ nm}{2}\mu e^{\frac{2}mf}.
			\end{array}
\end{equation}
On the other hand, let $\kappa\in \mathbb{R}$ be the unique constant such that $J_f^m+m\kappa e^{\frac{f}m}=(m+n)\lambda$ (see Lemma~\ref{lemma:scale}). Solving this equation for $\tau$ yields
\begin{equation}\label{eq:tau_kappa}
			\begin{array}{rcll}
 				\tau+ 2\Delta f&=&2(n+m-1)\left((m+n)\lambda-m\kappa e^{\frac{f}m}\right) \\
				\noalign{\medskip}
				&&+\frac{m+1}{m}||\nf||^2-m(m-1)\mu \,e^{\frac{2}m f}.
			\end{array}
\end{equation}
 If $m=1$, since $\tau$ is constant, equation \eqref{eq:tau_kappa} becomes
	$\Delta f+\, \mathrm{l.o.t}=0$,
where l.o.t. stands for lower order terms involving the metric and the density function.  If $m\neq 1$,  we can use \eqref{eq:tau_kappa} to write $\tau+2\Delta f+\, \mathrm{l.o.t}=0$, while, by \eqref{eq:laplacian_1}, we have $\Delta f+\frac{n+2m-2}{2(m-1)}\tau+\, \mathrm{l.o.t}=0$.
Combining both equations, we have	
	\[
		\frac{n+m-1}{m-1}\Delta f+\, \mathrm{l.o.t}=0.
	\]
Since $n\geq 3$ and $m\in \mathbb{R}^+-\{1\}$, we can write $\Delta f+\, \mathrm{l.o.t}=0$. On the other hand, the weighted Einstein equation $\rho_f^m=((n+m-2)\lambda+J_f^m) g$ takes the form
\[
	\rho+\operatorname{Hes}_f-\frac{1}{m} df\otimes df=(2(n+m-1)\lambda-m\kappa e^{\frac{f}m}) g
\]
so, for any $m\in \mathbb{R}^+$, we end up with
\[
		\begin{array}{rcll}
 				\rho+\operatorname{Hes}_f+\, \mathrm{l.o.t}&=&0, \\
				\noalign{\medskip}
				\Delta f+\, \mathrm{l.o.t}&=&0.
		\end{array}
\]
In harmonic coordinates, these geometric equations become a quasi-linear  second-order system of EDPs:
\[
		\begin{array}{rcll}
 				-\frac{1}{2}g^{rs}\frac{\partial^2 g_{ij}}{\partial x^r \partial x^s}+\frac{\partial^2 f}{\partial x^i \partial x^j}+\, \mathrm{l.o.t}&=&0, \\
				\noalign{\bigskip}
				g^{rs}\frac{\partial^2 f}{\partial x^r \partial x^s}+\, \mathrm{l.o.t}&=&0.
		\end{array}
\]
The principal symbol $\sigma_\xi:S^2T^*\!M\oplus C^\infty(M)\rightarrow S^2T^*\!M\oplus C^\infty(M)$ is given by
\[
	(h,\omega)\mapsto \sigma_\xi(h,\omega)=\left(-\frac{1}2||\xi||^2 h+ \omega \xi\otimes \xi,||\xi||^2\omega\right).
\]
If $\sigma_\xi(h,\omega)=0$ and $\xi\neq 0$, then it follows that $\omega=0$. In this case, $h$ must also vanish. Thus, $\sigma_\xi$ is an automorphism of $S^2T^*M\oplus C^\infty(M)$, and the quasi-linear system is elliptic. Moreover, the whole system is of the form $F(g,f,\partial g, \partial f, \partial^2 g, \partial^2 f)=0$, where $F$ is real analytic. It follows that both the metric and the density function are real analytic in harmonic coordinates (see \cite[J.41]{Besse}).
\end{proof}

\begin{remark}\label{remark:open-dense}
Note that, since  $f$ is non-constant and, by Theorem~\ref{th:analytic_solutions}, it is real analytic in harmonic coordinates, the set $\tilde{M}=\{p\in M| (\nf)_p\neq 0\}$ of regular points of $f$ is open and dense in $M$. This fact will be crucial in the discussion of the global structure of complete weighted Einstein SMMSs in Section~\ref{sec:global-results}.
\end{remark}

\section{Weighted Einstein manifolds with weighted harmonic Weyl tensor}\label{sec:WEWH-manifolds}

The objective in this section is to study the local geometric structure of weighted Einstein manifolds with weighted harmonic Weyl tensor in any dimension, for arbitrary values of the parameters $m$ and $\mu$. First, we shall point out that, in contrast to the unweighted setting, where the Einstein condition implies the harmonicity of the Weyl tensor, the condition $\delta_f W_f^m=0$ does not follow from $P_f^m=\lambda g$, for any value of the dimensional parameter $m$. The following examples illustrate this fact appropriately.

\begin{example}\label{ex:wenonharm1}\rm
	Let $m\in \mathbb{R}^+-\{\frac{1}2\}$, and let $(\mathbb{R}^+\times\mathbb{R}^3,g)$ be the $4$-dimensional Riemannian manifold with local coordinates $(x_1,\dots,x_4)$ and metric given by the only non-vanishing components $g(\partial_{x_i},\partial_{x_i})=x_1^{2m}$. For the density function  $ f_m(x_1)=-2m(m+1)\log(x_1)$, and the parameter $\mu=0$, this manifold satisfies $P_f^m=0$. However, its weighted Weyl tensor is not weighted harmonic. The non-vanishing components of $\delta_fW^{m}_f$ are $\delta_fW^{m}_f(\partial_{x_i},\partial_{x_1},\partial_{x_i})=\frac{2m(2m^2+m-1)}{x_1^3}$, $i>1$ (up to symmetries). 
	
Notice that the value $m=\frac12$ is special in this family. Indeed, the SMMS $(\mathbb{R}^+\times\mathbb{R}^3,g,f_{1/2},\frac{1}2,0)$ corresponds to Example~\ref{ex:not-Einstein} for $t=\frac23 (x_1)^{3/2}$, $A=1$ and $B=\frac32$. The weighted Schouten tensor and the weighted divergence of its weighted Weyl tensor vanish ($P_f^{1/2}=0$ and $\delta_fW^{1/2}_f=0$), but the weighted Weyl tensor itself does not.
\end{example}

The following SMMS in dimension 3 provides an example of weighted Einstein manifold with $\delta_f^mW_f^m\neq 0$ for the remaining value $m=\frac{1}2$.

\begin{example}\label{ex:wenonharm2}\rm
	Let $(\mathbb{R}^+\times \mathbb{R}^2,g)$ be the 3-dimensional Riemannian manifold with the metric given by the only non-vanishing components $g(\partial_{x_i},\partial_{x_i})=x_1^{\frac{2}3(3-\sqrt{6})}$. For the density function $f(x_1)=-\sqrt{\frac{2}3}\log(x_1)$, and the parameters $m=\frac{1}2$ and $\mu=0$, this manifold satisfies $P_f^{1/2}=0$. However, its weighted Weyl tensor is not harmonic in the weighted sense. The non-vanishing components of $\delta_{f}W^{1/2}_{f}$ are $\delta_fW^{1/2}_f(\partial_{x_i},\partial_{x_1},\partial_{x_i})=\frac{4(\sqrt{6}-3)}{9x_1^3}$, $i>1$ (up to symmetries).
\end{example}

\begin{remark}\rm\label{remark:counterexamples-warped}
Although several geometrical conditions on SMMSs have a counterpart on formal warped products \eqref{eq:formal-warped-product}, the conditions we are considering in this work do not in general. In fact, fix $m=3$ in Example~\ref{ex:wenonharm1} and consider the warped product  $\mathbb{R}^+\times\mathbb{R}^3\times_{v} \mathbb{R}^3$, where $v(x_1)=x_1^8$. This warped product is not Einstein but has harmonic Weyl tensor. Thus, Example~\ref{ex:wenonharm1} additionally illustrates that a weighted Einstein SMMS does not give rise to an Einstein warped product \eqref{eq:formal-warped-product} and that a warped product \eqref{eq:formal-warped-product} does not induce a SMMS with weighted harmonic Weyl tensor.

Conversely, for $\mu\neq 0$, consider the SMMS $(\mathbb{R}^+\times_\varphi\mathbb{R}^3,f,2,\mu)$, where  $\varphi(t)=t^{\frac{1}{3}}$ and $f(t)=-\log(t)$ (hence $v(t)=t^{\frac{1}2}$), as in Example~\ref{ex:not-Einstein}. This SMMS has weighted harmonic Weyl tensor, although it is not weighted Einstein for any $\lambda\in \mathbb{R}$. However, the warped product $\mathbb{R}^+\times_\varphi\mathbb{R}^3\times_{v} F^2(\mu)$ does not have harmonic Weyl tensor. Thus, the weighted harmonicity condition on the weighted Weyl tensor does not induce, in general, a warped product \eqref{eq:formal-warped-product} with harmonic Weyl tensor (cf. Remark~\ref{remark:multi-warped}).
\end{remark}

We begin the analysis of weighted Einstein SMMSs with the following lemma, which shows a useful way of expressing the weighted divergence $\delta_fW_f^m$, where the function $Y^m_f=J^m_f-\tr P^m_f$ is involved. 

\begin{lemma}\label{lemma:Main-Expression-1}
	Let $(M^n,g,f,m,\mu)$ be a SMMS such that $P_f^m=\lambda g$. Then, the following equation is satisfied for all $X,Y,Z\in \mathfrak{X}(M)$:
	\begin{equation}\label{eq:Main-Expression-1}
\begin{array}{rcl}
 	\delta_fW_f^m(X,Y,Z)&=&\left(\frac{1}{m}Y^m_f+\lambda \right)\left\{df(Y)g(X,Z)-df(Z)g(X,Y)\right\}\\
	\noalign{\smallskip}
	&& -\frac{1}{m}\left\{df(Y)\Hes_f(X,Z)-df(Z)\Hes_f(X,Y) \right\}.
\end{array}
	\end{equation}
\end{lemma}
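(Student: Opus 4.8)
The plan is to expand $\delta_f W_f^m = \delta W_f^m - \iota_{\nf} W_f^m$ directly, using the explicit form \eqref{eq:weighted-weyl-Einstein} of the weighted Weyl tensor available under the hypothesis $P_f^m = \lambda g$. Since $W_f^m = R - 2\lambda\, (g \KN g)/2$ (i.e.\ $W_f^m(X,Y,Z,U) = R(X,Y,Z,U) - 2\lambda\{g(X,Z)g(Y,U) - g(X,U)g(Y,Z)\}$), and the Kulkarni--Nomizu term $g\KN g$ is parallel, we have $\delta W_f^m = \delta R$, which by the contracted second Bianchi identity equals $dP$ up to the standard normalization (more precisely, $\delta R(X,Y,Z) = (\nabla_X \rho)(Y,Z) - (\nabla_Y\rho)(X,Z)$ in the convention of the paper, i.e.\ essentially the Cotton-type expression). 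So the first task is to write $\delta W_f^m$ in terms of $\nabla \rho$, and then convert $\rho$ back to weighted quantities.

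The key algebraic input is the relation \eqref{eq:constant-wE}, which gives $\rho_f^m = \alpha g$ with $\alpha = (n+m-2)\lambda + J_f^m$ for a weighted Einstein manifold; recalling $\rho_f^m = \rho + \Hes_f - \frac1m df\otimes df$, this yields
\[
\rho = \alpha\, g - \Hes_f + \tfrac1m\, df\otimes df .
\]
Differentiating and antisymmetrizing, the $\alpha g$ term contributes $(d\alpha) \wedge g$-type terms, the $\Hes_f$ term contributes $\nabla\Hes_f = \nabla^3 f$ terms which one commutes past a curvature term using the Ricci identity $(\nabla_X \Hes_f)(Y,Z) - (\nabla_Y\Hes_f)(X,Z) = -R(X,Y,Z,\nf)$ (Bochner-type commutation for third covariant derivatives of a function), and the $\frac1m df\otimes df$ term contributes $\frac1m\{\Hes_f(X,\cdot)df(\cdot) - \Hes_f(Y,\cdot)df(\cdot)\}$-type terms after the product rule. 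After the dust settles, the curvature term $R(X,Y,Z,\nf)$ reassembles with the $2\lambda$ piece from $-\iota_{\nf}W_f^m$ (note $\iota_{\nf}W_f^m(X,Y,Z) = R(X,Y,Z,\nf) - 2\lambda\{g(X,Z)df(Y) - \text{(other term)}\}$, where I'm using that $\nf$ is being inserted in the last slot up to the symmetries of $W_f^m$), so the explicit $R$ terms cancel, leaving only terms built from $g$, $df$, $\Hes_f$, and the function $Y_f^m$ (which enters through $d\alpha = (n+m-2)\,d\lambda + dJ_f^m = dJ_f^m$, and $J_f^m$ relates to $Y_f^m$ and $\tr P_f^m = n\lambda$ via $Y_f^m = J_f^m - n\lambda$). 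Tracking the constant $\frac1m$ through the $df\otimes df$ contribution and through the weighted-divergence correction $-\iota_{\nf}$ should produce exactly the coefficients $\frac1m Y_f^m + \lambda$ and $-\frac1m$ in \eqref{eq:Main-Expression-1}.

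The main obstacle I anticipate is bookkeeping the several sign and index-placement conventions consistently: the paper's curvature sign convention ($R(X,Y) = \nabla_{[X,Y]} - [\nabla_X,\nabla_Y]$ is the opposite of some common choices), the precise form of the contracted Bianchi identity for $\delta R$ in that convention, the exact identity for the antisymmetrized third derivative of $f$, and which slots $\nf$ and $Y,Z$ occupy in $\iota_{\nf}W_f^m$ after using the pair symmetry $W_f^m(X,Y,Z,U) = W_f^m(Z,U,X,Y)$. Getting the cancellation of the curvature terms to work out requires these to be mutually compatible, and a single sign error will spoil it. A useful consistency check at the end: contract \eqref{eq:Main-Expression-1} over $X$ and $Z$; the left side should vanish (since $\delta_f W_f^m$ is trace-free in an appropriate weighted sense, or can be checked directly), and the right side gives $(\frac1m Y_f^m + \lambda)(n-1)df(Y) - \frac1m\{\Delta f\, df(Y) - \Hes_f(Y,\nf)\}$-type terms, which should collapse using $\Delta f$ expressed via $\tau_f^m$ and the definition of $Y_f^m$ — this will catch most sign errors before they propagate.
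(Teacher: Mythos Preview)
Your overall strategy is essentially the paper's: write $\delta_f W_f^m=\delta R-\iota_{\nf}W_f^m$, express $\rho$ via the weighted Einstein equation $\rho=\alpha g-\Hes_f+\tfrac1m\,df\otimes df$ with $\alpha=(n+m-2)\lambda+J_f^m$, differentiate and antisymmetrize, use the Ricci identity to turn the antisymmetrized third derivative of $f$ into a curvature term, and let that curvature term cancel against $\iota_{\nf}W_f^m$. The paper organizes the same computation around $\nabla\rho_f^m$ rather than $\nabla\rho$, but algebraically it is the same.

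There is, however, one genuine missing ingredient. After the cancellations you describe, your expression will contain the term
\[
d\alpha(Y)\,g(X,Z)-d\alpha(Z)\,g(X,Y),
\]
and you only observe that $d\alpha=dJ_f^m=dY_f^m$. To obtain the specific coefficient $\tfrac1m Y_f^m+\lambda$ in \eqref{eq:Main-Expression-1} you must know that
\[
dJ_f^m=\Bigl(\tfrac{1}{m}Y_f^m-\lambda\Bigr)df,
\]
so that, combined with the $+2\lambda$ from $-\iota_{\nf}W_f^m$, one gets $\tfrac1m Y_f^m+\lambda$. This identity is \emph{not} automatic from the relation $Y_f^m=J_f^m-n\lambda$; it requires an additional input. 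The paper derives it from Case's trace identity $\tr(dP_f^m)=\iota_{\nf}P_f^m+dY_f^m-\tfrac1m Y_f^m\,df$ together with $P_f^m=\lambda g$ being Codazzi (so $dP_f^m=0$). Equivalently, you could read it off from the scale relation in Lemma~\ref{lemma:scale}, since $J_f^m=(m+n)\lambda-m\kappa e^{f/m}$ gives $dJ_f^m=-\kappa e^{f/m}\,df=(\tfrac1m Y_f^m-\lambda)\,df$. Either way, you need to invoke one of these facts explicitly; your proposed trace ``consistency check'' would only confirm the formula after the fact, not supply the missing step in the derivation.
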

\begin{proof}
Firstly, we recall the following relation for arbitrary SMMSs (see \cite{Case-Tractors} for a detailed derivation, or \cite{Case-Sigmak} for the form used here):
\[
	\mathrm{tr}(dP^m_f)=\iota_{\nf}P^m_f+dY_f^m-\frac{1}{m}Y_f^mdf.
\]
Since the weighted Schouten tensor satisfies $P_f^m=\lambda g$, it is Codazzi, so the weighted Cotton tensor \eqref{eq:Weighted-Cotton-tensor} vanishes and the expression above transforms into $dJ_f^m=dY^m_f=\left(\frac{1}{m}Y^m_f-\lambda \right)df$.  On the other hand, we calculate the covariant derivative of the Bakry-\'Emery Ricci tensor,
\[
\begin{array}{rcl}
 	(\nabla_Y\rho^m_f)(X,Z)&=&(\nabla_Y\rho)(X,Z)+g(\nabla_Y\nabla_Z\nf,X)-g(\nabla_{\nabla_YZ}\nf,X) \\
	\noalign{\smallskip}
	&& -\frac{1}m\left\{df(X)\Hes_f(Y,Z)+df(Z)\Hes_f(X,Y)\right\}. \\
\end{array}
\]
Furthermore, the weighted Einstein equation reads $\rho_f^m=\{(2n+m-2)\lambda+Y_f^m\}g$, so we also have $(\nabla_Y\rho^m_f)(X,Z)=\left(\frac{1}{m}Y^m_f-\lambda \right)df(Y)g(X,Z)$. We can now take the difference $(\nabla_Y\rho^m_f)(X,Z)-(\nabla_Z\rho^m_f)(X,Y)$ to find (cf. \cite{Brozos-Iso-GQE})
\begin{equation}\label{eq:Rnf}
\begin{array}{rcl}
 	R(\nf,X,Y,Z)&=&\left(\frac{1}{m}Y^m_f-\lambda \right)\left\{df(Z)g(X,Y)-df(Y)g(X,Z)\right\} \\
	\noalign{\smallskip}
	&& +(\nabla_Y\rho)(X,Z)-(\nabla_Z\rho)(X,Y) \\
	\noalign{\smallskip}
	&& +\frac{1}{m}\left\{df(Y)\Hes_f(X,Z)-df(Z)\Hes_f(X,Y) \right\}.
\end{array}
\end{equation}
Finally, since $P_f^m=\lambda g$, we have $W_f^m=R-\lambda g\KN g$. Hence
\begin{equation}\label{eq:weighted_divergence_Einstein}
\begin{array}{rcl}
 	\delta_fW_f^m(X,Y,Z)&=&\delta W^m_f(X,Y,Z)-\iota_{\nf} W^m_f(X,Y,Z)\\
 	\noalign{\medskip}
 	&=&\delta R(X,Y,Z)-R(\nf,X,Y,Z) \\
	\noalign{\smallskip}
	&&+2\lambda\{df(Y)g(X,Z)-df(Z)g(X,Y)\}.
\end{array}
\end{equation}
Since $\delta R(X,Y,Z)=(\nabla_Y\rho)(X,Z)-(\nabla_Z\rho)(X,Y)$, a combination of \eqref{eq:Rnf} and \eqref{eq:weighted_divergence_Einstein} yields equation \eqref{eq:Main-Expression-1}.
\end{proof}
We are now ready to prove a first rigidity result, concerning the warped product structure of these SMMSs around regular points of $f$. 

\begin{lemma}\label{lemma:local-splitting}
	Let $(M^n,g,f,m,\mu)$ be a SMMS with $P_f^m=\lambda g$ and $\delta_fW_f^m=0$. Let $p\in M$ be a regular point of $f$. Then, there exists a neighborhood $\mathcal{U}$ of $p$ in $M$ which is isometric to a warped product $I\times_\varphi N$, where $I\subset \mathbb{R}$ is an open interval, $N$ is an $(n-1)$-dimensional Einstein  manifold, and $\nf$ is tangent to $I$.
\end{lemma}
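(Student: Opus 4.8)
The plan is to show that near a regular point $p$ of $f$ the level sets of $f$ are totally umbilic with mean curvature depending only on $f$, and that $\nf/\|\nf\|$ is a geodesic vector field, so that a standard characterization (as in \cite{Besse}) produces the warped-product splitting. First I would take $X$ orthogonal to $\nf$ and pair it appropriately in the key identity \eqref{eq:Main-Expression-1} from Lemma~\ref{lemma:Main-Expression-1}: choosing $Y=\nf$ and $Z=X$ with $df(X)=0$ gives $\delta_fW_f^m(X,\nf,X)=\left(\tfrac1m Y_f^m+\lambda\right)\|\nf\|^2 g(X,X)-\tfrac1m\|\nf\|^2\Hes_f(X,X)$, and since the left-hand side vanishes this forces
\[
	\Hes_f(X,X)=\left(Y_f^m+m\lambda\right)g(X,X)
\]
for all $X\perp\nf$; polarizing, $\Hes_f(X,Y)=(Y_f^m+m\lambda)\,g(X,Y)$ on the orthogonal distribution $\nf^\perp$. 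This says exactly that the second fundamental form of each regular level set of $f$ is a function times the induced metric, i.e. the level sets are totally umbilic.

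Next I would extract the behaviour of $\nf$ itself. Choosing in \eqref{eq:Main-Expression-1} the triple with $Y=\nf$, $Z$ arbitrary orthogonal to $\nf$, and $X$ also orthogonal, one checks the remaining components of $\Hes_f$ are controlled: the mixed term $\Hes_f(\nf,X)$ for $X\perp\nf$ can be read off from $d(\|\nf\|^2)$, and combined with the umbilicity above one sees that $\nabla_{\nf}\nf$ is parallel to $\nf$, so the integral curves of $\nf/\|\nf\|$ are (reparametrized) geodesics. Equivalently, since $\Hes_f$ restricted to $\nf^\perp$ is pure trace and $f$ is (locally) a function of arc length along its gradient flow, the distribution $\nf^\perp$ is integrable with totally umbilic leaves and its orthogonal complement is spanned by a geodesic field — precisely the hypotheses of the local warped-product decomposition lemma (\cite[\S1.H, see also 16.xx]{Besse}): there is a neighborhood $\mathcal U\cong I\times_\varphi N$ with $I$ an open interval, $\nf$ tangent to $I$, and the warping function $\varphi$ determined by the umbilicity factor, namely $\varphi'/\varphi$ equals (a multiple of) $Y_f^m+m\lambda$ along $I$.

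Finally I would verify that the fiber $N$ is Einstein. Since $\mathcal U$ is a warped product $I\times_\varphi N$ over a one-dimensional base, its Ricci and hence its Weyl tensor decompose in the standard warped-product way; feeding this into \eqref{eq:weighted-weyl-Einstein}, which says $W_f^m=R-2\lambda\,g\KN g$, and using $\delta_fW_f^m=0$ together with the already-established form of $\Hes_f$, the "fiber" components of the identity reduce to the statement that $\Ric^N$ is proportional to $g_N$; because $\dim N=n-1$ and the base is one-dimensional there is no curvature obstruction to this being a constant, so $N$ is Einstein. The main obstacle I anticipate is the bookkeeping in this last step — carefully separating the $\partial_t$--$\partial_t$, mixed, and fiber components of $\delta_fW_f^m$ in warped-product coordinates and confirming that the fiber part genuinely forces $\Ric^N=c\,g_N$ with $c$ constant (rather than merely $c\in C^\infty(N)$, which one rules out using that $\varphi$ depends only on $t$). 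The umbilicity argument via \eqref{eq:Main-Expression-1} is, by contrast, essentially immediate once the right substitutions are made.
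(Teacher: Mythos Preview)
Your overall strategy---use \eqref{eq:Main-Expression-1} to constrain $\Hes_f$, deduce a warped-product splitting, then argue the fiber is Einstein---is the paper's strategy. Two of your steps, however, are not yet complete.

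First, the vanishing of the mixed terms $\Hes_f(\nf,X)$ for $X\perp\nf$ does not follow from the substitution you describe (taking $Y=\nf$ and $X,Z\perp\nf$ only reproduces the umbilicity you already have), nor from ``$d(\|\nf\|^2)$ combined with umbilicity'': the identity $X(\|\nf\|^2)=2\Hes_f(\nf,X)$ is a tautology and umbilicity of the level sets says nothing about this quantity. Without $\Hes_f(\nf,X)=0$ you only get a \emph{twisted} product, not a warped one. The correct move (and the paper's) is to set $X=Z=E_1=\nf/\|\nf\|$ and $Y=E_i\perp\nf$ in \eqref{eq:Main-Expression-1}; then $df(Y)=0$, $g(E_1,E_i)=0$, and the right-hand side collapses to $\tfrac1m\|\nf\|\,\Hes_f(E_1,E_i)$, forcing the mixed Hessian to vanish.

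Second, the Einstein-fiber step does not need a further appeal to $\delta_fW_f^m=0$ or to the Weyl decomposition; the paper's route is more direct and uses only what you already have. From $\rho_f^m=\big((2n+m-2)\lambda+Y_f^m\big)g$ together with your umbilicity formula $\Hes_f(E_i,E_j)=(Y_f^m+m\lambda)\delta_{ij}$ and $df(E_i)=0$, one reads off immediately
\[
\rho(E_i,E_j)=\rho_f^m(E_i,E_j)-\Hes_f(E_i,E_j)=2(n-1)\lambda\,\delta_{ij},
\]
a \emph{constant}. Substituting into the standard warped-product Ricci formula gives
\[
\rho^N(\bar E_i,\bar E_j)=\varphi^2\Big(2(n-1)\lambda+\tfrac{\varphi''}{\varphi}+(n-2)\tfrac{(\varphi')^2}{\varphi^2}\Big)\delta_{ij};
\]
since the left-hand side depends only on the fiber and the right-hand side only on $t$, both must be constant. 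This is exactly the ``$c\in C^\infty(N)$ versus constant'' issue you flagged, and it resolves without touching $W_f^m$ again.
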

\begin{proof}
 Since $p$ is a regular point of $f$, $\nf\neq 0$ in a neighborhood of $p$. Thus, consider an orthonormal frame $\mathcal{B}=\{E_1,\dots,E_n\}$ around $p$, where  $E_1=\nf/||\nf||$. Since $W_f^m$ is weighted harmonic, the left side of equation~\eqref{eq:Main-Expression-1} vanishes. Consequently, we can take $X=Z=E_1$ and $Y=E_i$, $i\neq 1$, to find 
\begin{equation}\label{eq:Hesian-E1Ei}
\Hes_f(E_1,E_i)=0, \quad  i\neq1,
\end{equation}
which shows that the distribution generated by $\nf$ is totally geodesic. Furthermore, taking $X=E_i$, $Y=E_j$, $Z=E_1$, with $i,j\neq 1$,  equation \eqref{eq:Main-Expression-1} yields
\begin{equation}\label{eq:Hessian-Diagonal}
 	\Hes_f(E_i,E_j)=(Y^m_f+m \lambda)\delta_{ij}, \quad i,j\neq1.
\end{equation}
It follows that the level hypersurfaces of $f$ around $p$ are totally umbilical. Consequently, $(M,g)$ splits in a neighborhood $\mathcal{U}$ of $p$ as a twisted product $I\times_\psi N$, where $I\subset \mathbb{R}$ is an open interval, for some function $\psi$ on $I\times N$ (see \cite{Ponge-Twisted}). 
Moreover, the mean curvature vector field $\nabla f$ is parallel in the normal bundle $\operatorname{span}{\nabla f}$ due to \eqref{eq:Hesian-E1Ei}, so the leaves of the fiber are spherical. Hence, the twisted product reduces to a warped product $I\times_\varphi N$ for some function $\varphi$ on $I$ (see \cite{Hiepko}). Alternatively, since $\rho_f^m=\{(2n+m-2)\lambda+Y_f^m\}g$ and $\Hes_f$ diagonalizes in the frame $\mathcal{B}$, so does the Ricci tensor. Hence, the vanishing condition on mixed terms for $\rho$ given in \cite{Fernandez-Twisted} is satisfied and also implies the reduction of the twisted product to the warped product.

Now we show that $N$ is Einstein as follows. Let $t$ be a coordinate parameterizing $I$ by arc length, and consider the local orthonormal frame $\{\partial_t,E_2,\dots,E_n\}$. Note that $E_2,\dots, E_n$ are tangent to $N$.  Thus, from the weighted Einstein condition and \eqref{eq:Hessian-Diagonal}, we get that 
\[
\begin{array}{rcl}
\rho(E_i,E_j)&=&\rho^m_f (E_i,E_j)-\Hes_f(E_i,E_j)\\
\noalign{\medskip}
&=&\{(2n+m-2)\lambda+Y_f^m\} \delta_{ij}-(Y^m_f+m \lambda)\delta_{ij}\\
\noalign{\medskip}
&=& 2(n-1)\lambda \delta_{ij}
\end{array}
\] 
for $i,j=2,\dots,n$.
Moreover, consider the basis $\{\bar{E_i}=\varphi E_i\}_{i=2,\dots,n}$ which is orthonormal on $N$. From the expression of the Ricci tensor of a warped product (see \cite{Oneill}), we have
\begin{equation}\label{eq:ODE1precursor}
\begin{array}{rcl}
 	\rho^N(\bar{E_i},\bar{E_j})&=&\rho(\bar{E_i},\bar{E_j})+g(\bar{E_i},\bar{E_j})\left(\frac{\varphi''}{\varphi}+(n-2)\frac{(\varphi')^2}{\varphi^2}\right) \\
	\noalign{\medskip}
	&=& \varphi^2 \left(2(n-1)\lambda+\frac{\varphi''}{\varphi}+(n-2)\frac{(\varphi')^2}{\varphi^2}\right)\delta_{ij}.
\end{array}
\end{equation}
	Since $\rho^N(\bar{E_i},\bar{E_j})$ is a function defined on the fiber, it does not depend on $t$, which is a coordinate of the base. Hence,  $\rho^N=\beta g^N$  for some $\beta\in \mathbb{R}$ and $N$ is Einstein.
\end{proof}

\begin{remark}
Note that the local splitting given by Lemma~\ref{lemma:local-splitting} is reminiscent of the result found by Catino \cite{Catino-GQE}, in the unweighted Riemannian setting, for generalized quasi-Einstein manifolds with harmonic Weyl tensor such that $\iota_{\nf}W$=0. However, we do not require both summands in $\delta_fW_f^m$  to vanish, but merely that they cancel out (see Example~\ref{ex:not-Einstein}). 
\end{remark}

By Lemma~\ref{lemma:local-splitting}, whenever we are working locally around any regular point of $f$, we can assume without loss of generality that our SMMSs are built on a warped product of the form $I\times_\varphi N$ with the density function defined on $I$. Such a warped product is not weighted Einstein with weighted harmonic Weyl tensor in general, indeed equation~\eqref{eq:ODE1precursor} imposes a constraint on the warping function. The next result provides necessary and sufficient conditions that identify these  SMMSs in terms of an overdetermined system of ODEs. 

\begin{lemma}\label{lemma:necsuf-ODE}
Let $(I\times_\varphi N,g,f,m,\mu)$ be an $n$-dimensional warped product SMMS where $I$ is an open interval, $\nf$ is tangent to $I$, and such that $\rho^N=\beta g^N$ for some $\beta\in \mathbb{R}$.  
Then $P^m_f=\lambda g$, for $\lambda\in \mathbb{R}$, and $\delta_fW_f^m=0$ if and only if the following system of ODEs is satisfied:
\begin{eqnarray}
 				0&=&\beta-\varphi''\varphi-(n-2)(\varphi')^2-2(n-1)\lambda \varphi^2, \label{eq:warpex1} \\
				\noalign{\medskip}
				0&=&f''-(n-1)\frac{\varphi''}{\varphi}-\frac{1}{m}(f')^2-\frac{\varphi'f'}{\varphi}-2(n-1)\lambda,  \label{eq:densityeq1} \\
				\noalign{\medskip}
				0&=&\frac{\varphi' f'}{\varphi}+(n-m)\lambda-J_f^m \label{eq:hescond1},
	\end{eqnarray}
where the weighted Schouten scalar $J_f^m$ is given by
\[
			\begin{array}{rcll}
 				2(n+m-1)J^m_f&=&(n-1)\frac{\beta-(n-2)(\varphi')^2}{\varphi^2}+2(n-1)\frac{\varphi'f'-\varphi''}{\varphi} \\
				\noalign{\medskip}
				&&+2f''-\frac{1+m}m(f')^2+m(m-1)e^{2f/m}\mu.
			\end{array}
	\]
\end{lemma}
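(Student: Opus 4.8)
The plan is to compute the three weighted quantities $P^m_f$, $\delta_f W^m_f$, and $J^m_f$ explicitly on the warped product $I\times_\varphi N$ using the standard O'Neill formulas, and then to identify which combinations of these formulas are equivalent to the stated system. First I would set up a local orthonormal frame $\{\partial_t, E_2,\dots,E_n\}$ with $E_i$ tangent to $N$ (rescaled from an orthonormal frame $\bar E_i$ of $(N,g^N)$ by $E_i=\varphi^{-1}\bar E_i$), so that $f=f(t)$, $\nf=f'\partial_t$, and $\Hes_f$ is diagonal with $\Hes_f(\partial_t,\partial_t)=f''$ and $\Hes_f(E_i,E_j)=\tfrac{\varphi'f'}{\varphi}\,\delta_{ij}$. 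The Ricci tensor of the warped product is likewise diagonal, with $\rho(\partial_t,\partial_t)=-(n-1)\tfrac{\varphi''}{\varphi}$ and $\rho(E_i,E_j)=\big(\tfrac{\beta-(n-2)(\varphi')^2}{\varphi^2}-\tfrac{\varphi''}{\varphi}\big)\delta_{ij}$. Plugging these into $\rho^m_f=\rho+\Hes_f-\tfrac1m df\otimes df$ gives the Bakry-\'Emery Ricci tensor in this frame, and the weighted scalar curvature $\tau^m_f$ from \eqref{eq:Weighted-scalar-curvature} yields the displayed formula for $J^m_f$ (using $\tau=2(n-1)\tfrac{\varphi''}{\varphi}\cdot(-1)\cdot\ldots$; more precisely $\tau=-2(n-1)\tfrac{\varphi''}{\varphi}+(n-1)\tfrac{\beta-(n-2)(\varphi')^2}{\varphi^2}$, $\Delta f=f''+(n-1)\tfrac{\varphi'f'}{\varphi}$, $\|\nf\|^2=(f')^2$).

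Next, for the forward direction, assume $P^m_f=\lambda g$. By \eqref{eq:Weighted-Schouten-tensor} this is equivalent to $\rho^m_f=\big((n+m-2)\lambda+J^m_f\big)g$, and since $\rho^m_f$ is diagonal in our frame this splits into a $\partial_t\partial_t$-component and an $E_iE_i$-component. The $E_iE_i$-component reads
\[
	\frac{\beta-(n-2)(\varphi')^2}{\varphi^2}-\frac{\varphi''}{\varphi}+\frac{\varphi'f'}{\varphi}=(n+m-2)\lambda+J^m_f ,
\]
while the $\partial_t\partial_t$-component reads $-(n-1)\tfrac{\varphi''}{\varphi}+f''-\tfrac1m(f')^2=(n+m-2)\lambda+J^m_f$. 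Subtracting these two equations eliminates $J^m_f$ and, after clearing $\varphi^2$, is to give \eqref{eq:warpex1} (indeed it coincides with the vanishing of \eqref{eq:ODE1precursor}, i.e.\ $\beta=\varphi''\varphi+(n-2)(\varphi')^2+2(n-1)\lambda\varphi^2$, once one also uses the relation between the two components). Then I would impose $\delta_f W^m_f=0$: by Lemma~\ref{lemma:Main-Expression-1}, since $\nf=f'\partial_t$ and $\Hes_f$ is diagonal, the only independent non-trivial component is obtained with $X=Z=E_i$, $Y=\partial_t$, giving $\big(\tfrac1m Y^m_f+\lambda\big)f'-\tfrac1m f'\,\Hes_f(E_i,E_i)=0$, i.e.\ (at regular points, where $f'\neq0$) $\Hes_f(E_i,E_i)=Y^m_f+m\lambda$, which is $\tfrac{\varphi'f'}{\varphi}=Y^m_f+m\lambda$. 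Now $Y^m_f=J^m_f-\tr P^m_f=J^m_f-n\lambda$, so this is exactly \eqref{eq:hescond1}. Finally \eqref{eq:densityeq1} is to come from one of the two diagonal components of $P^m_f=\lambda g$ after substituting \eqref{eq:warpex1} and \eqref{eq:hescond1} to remove $\beta$ and $J^m_f$ — concretely, take the $\partial_t\partial_t$-equation $-(n-1)\tfrac{\varphi''}{\varphi}+f''-\tfrac1m(f')^2=(n+m-2)\lambda+J^m_f$, use \eqref{eq:hescond1} to replace $(n+m-2)\lambda+J^m_f$ by $2(n-1)\lambda+\tfrac{\varphi'f'}{\varphi}$, and rearrange to obtain $f''-(n-1)\tfrac{\varphi''}{\varphi}-\tfrac1m(f')^2-\tfrac{\varphi'f'}{\varphi}-2(n-1)\lambda=0$.

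For the converse, I would simply run the implications backwards: given the three ODEs and the formula for $J^m_f$, reconstruct that $\rho^m_f-\big((n+m-2)\lambda+J^m_f\big)g$ vanishes componentwise (the $E_iE_i$-component from \eqref{eq:warpex1} together with the definition of $J^m_f$, the $\partial_t\partial_t$-component from \eqref{eq:densityeq1} and \eqref{eq:hescond1}, and the off-diagonal components automatically, since everything is diagonal in the adapted frame and $\Hes_f(E_1,E_i)=0$ because $f$ depends only on $t$ and the $E_1$-distribution is totally geodesic in a warped product); hence $P^m_f=\lambda g$. Then, with $P^m_f=\lambda g$ in hand, Lemma~\ref{lemma:Main-Expression-1} applies and \eqref{eq:hescond1} (rewritten as $\Hes_f(E_i,E_i)=Y^m_f+m\lambda$) forces every component of $\delta_f W^m_f$ in \eqref{eq:Main-Expression-1} to vanish, using once more that $\Hes_f$ is diagonal and $\nf\parallel\partial_t$. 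The main obstacle I anticipate is purely bookkeeping: correctly assembling the O'Neill Ricci and scalar-curvature formulas with the right signs and $\varphi$-powers, keeping track of the distinction between the $N$-orthonormal frame $\bar E_i$ and the $g$-orthonormal frame $E_i=\varphi^{-1}\bar E_i$ (a recurring source of factor-$\varphi^2$ errors), and verifying that the particular linear combinations of the diagonal equations reproduce exactly \eqref{eq:warpex1}--\eqref{eq:hescond1} rather than some equivalent but differently-packaged system — there is no conceptual difficulty, only the need for care in the algebra.
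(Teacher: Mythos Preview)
Your proposal is essentially the paper's own argument: set up the adapted orthonormal frame, compute $\rho^m_f$ and $J^m_f$ via the O'Neill formulas, split $P^m_f=\lambda g$ into its $\partial_t\partial_t$- and $E_iE_i$-components, extract \eqref{eq:hescond1} from Lemma~\ref{lemma:Main-Expression-1}, and then check the converse componentwise. One small correction to your bookkeeping: subtracting the two diagonal components of $\rho^m_f=((n+m-2)\lambda+J^m_f)g$ does \emph{not} by itself produce \eqref{eq:warpex1} (the difference still contains $f'',f',\tfrac{\varphi'f'}{\varphi}$); rather, as the paper does, you should first obtain \eqref{eq:hescond1} and then substitute $J^m_f=\tfrac{\varphi'f'}{\varphi}+(n-m)\lambda$ into the $E_iE_i$-component to get \eqref{eq:warpex1}, and into the $\partial_t\partial_t$-component to get \eqref{eq:densityeq1} --- exactly the manipulation you describe for the latter.
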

\begin{proof}
Let $t$ be a local coordinate parameterizing $I$ by arc length.
We work in the local orthonormal frame $\mathcal{B}=\{\partial_t,E_2,\dots,E_n\}$. The Bakry-\'Emery Ricci tensor \eqref{eq:Bakry-Emery-Ricci-tensor} takes the form
\[
			\begin{array}{rclll}
 				\rho_f^m(\partial_t,\partial_t)&=&-(n-1)\frac{\varphi''}{\varphi}+f''-\frac{1}{m}(f')^2, \qquad  \rho_f^m(\partial_t,E_i)=0, \\
				\noalign{\medskip}
				\rho_f^m(E_i,E_j)&=&\left(\frac{\beta}{\varphi^2}-\frac{\varphi''}{\varphi}-(n-2)\frac{(\varphi')^2}{\varphi^2}+\frac{\varphi' f'}{\varphi}\right)\delta_{ij}.
			\end{array}
	\]
Thus, the fact that $(I\times_\varphi N,g,f,m,\mu)$ is weighted Einstein, using \eqref{eq:constant-wE} to express it as $\rho_f^m=((n+m-2)\lambda+J_f^m)g$, is equivalent to the following two equations:
\begin{eqnarray}
 				-(n-1)\frac{\varphi''}{\varphi}+f''-\frac{1}{m}(f')^2&=&(n+m-2)\lambda+J_f^m,\label{eq:1} \\			
			\frac{\beta}{\varphi^2}-\frac{\varphi''}{\varphi}-(n-2)\frac{(\varphi')^2}{\varphi^2}+\frac{\varphi' f'}{\varphi}&=&(n+m-2)\lambda+J_f^m.\label{eq:2}
			\end{eqnarray}
On the one hand, a direct calculation on the warped product manifold yields $\Hes_f(E_i,E_i)=\frac{\varphi' f'}{\varphi}$. On the other hand, if $\delta_fW_f^m=0$, then equation \eqref{eq:Hessian-Diagonal} is also satisfied and gives $\Hes_f(E_i,E_i)=(m-n)\lambda+J_f^m$, where we have used that $Y^m_f=J^m_f-n\lambda$ on a weighted Einstein SMMS. Hence, $\frac{\varphi' f'}{\varphi}=(m-n)\lambda+J_f^m$, which is equation~\eqref{eq:hescond1}. Now, substituting the term $J_f^m$ in \eqref{eq:2} and \eqref{eq:1} yields, respectively, \eqref{eq:warpex1} and \eqref{eq:densityeq1}. The form of the weighted Schouten scalar $J_f^m$ follows from a direct computation of the weighted scalar curvature \eqref{eq:Weighted-scalar-curvature}.

Conversely, if \eqref{eq:warpex1}-\eqref{eq:hescond1} are satisfied, then \eqref{eq:1} and \eqref{eq:2} hold and the equation $P_f^m=\lambda g$ is also satisfied. Thus, we only need to check that they are sufficient conditions for the weighted harmonicity condition $\delta_fW_f^m=0$. To that end, we use the expression given by \eqref{eq:Main-Expression-1} for $\delta_fW_f^m$, which applies to any weighted Einstein manifold. By the symmetries of this tensor, we only need to analyze the following terms:
\[
			\begin{array}{rcll}
 				\delta_fW_f^m(\partial_t,E_i,E_j)&=&0, \qquad \delta_fW_f^m(E_i,E_j,E_k)=0, \\
				\noalign{\medskip}
				\delta_fW_f^m(\partial_t,E_i,\partial_t)&=&\frac{f'}{m} \Hes_f(\partial_t,E_i)=0, \\
				\noalign{\medskip}
				\delta_fW_f^m(E_i,\partial_t,E_j)&=& \left(\frac{1}{m}Y^m_f+\lambda \right) f'\delta_{ij}-\frac{1}{m}f' \Hes_f(E_i,E_j)\\
				&=&\frac{1}{m}\left(J^m_f+(m-n)\lambda-\frac{\varphi' f'}{\varphi} \right) f'\delta_{ij}\stackrel{\eqref{eq:hescond1}}{=}0.
			\end{array}
	\]
Hence, equations \eqref{eq:warpex1}-\eqref{eq:hescond1} are sufficient for the warped product $I\times_\varphi N$ to be a weighted Einstein manifold with weighted harmonic Weyl tensor.
\end{proof}

\begin{remark}\label{remark:Ricci}
	 Consider a warped product SMMS $(I\times_\varphi N,g,f,m,\mu)$ as in Lemma~\ref{lemma:necsuf-ODE}, which satisfies \eqref{eq:warpex1}--\eqref{eq:hescond1}. Then, the Ricci tensor is readily determined using equation~\eqref{eq:warpex1} and the expressions of the Ricci tensor for a warped product  (see \cite{Oneill}):
	 \[
	\rho(\partial_t,\partial_t)=-(n-1)\frac{\varphi''}{\varphi},\quad \rho(\partial_t,X)=0, \quad \rho(X,Y)=2(n-1)\lambda g(X,Y),
	 \]
for any $X,Y\in  \mathfrak{X}(N) $.
Therefore, the underlying manifold is Einstein if and only if  $\varphi''=-2\lambda \varphi$. As we will shortly show, this is one of two cases which are allowed for this kind of manifold, with the geometry of the non-Einstein case being very heavily restricted. 
\end{remark}

\begin{lemma}\label{lemma:two-cases}
Let $(I\times_\varphi N,g,f,m,\mu)$ be an $n$-dimensional warped product SMMS where $I$ is an open interval, $\nf$ is tangent to $I$, and such that $\rho^N=\beta g^N$ for some $\beta\in \mathbb{R}$. Let $t$ be a local coordinate parameterizing $I$ by arc length. If $P^m_f=\lambda g$ for some $\lambda\in \mathbb{R}$ and $\delta_fW_f^m=0$, then either $I\times_\varphi N$ is Einstein, or $\varphi(t)=Ae^{-\frac{f(t)}{n-1}}$, for some $A\in \mathbb{R}^+$.
\end{lemma}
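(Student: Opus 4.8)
The plan is to exploit the three ODEs \eqref{eq:warpex1}--\eqref{eq:hescond1} from Lemma~\ref{lemma:necsuf-ODE} and extract a first-order relation between $\varphi$ and $f$ that, together with the Einstein alternative identified in Remark~\ref{remark:Ricci}, yields the dichotomy. The key observation is that the combination we want, $\varphi = A e^{-f/(n-1)}$, is equivalent to the logarithmic-derivative identity $(n-1)\frac{\varphi'}{\varphi} + f' = 0$. So I would introduce the auxiliary function $\Phi := (n-1)\frac{\varphi'}{\varphi} + f'$ and aim to show that either $\Phi \equiv 0$ on $I$, or the manifold is Einstein (equivalently, by Remark~\ref{remark:Ricci}, $\varphi'' = -2\lambda\varphi$).

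First I would use equation \eqref{eq:densityeq1}, which reads $f'' = (n-1)\frac{\varphi''}{\varphi} + \frac{1}{m}(f')^2 + \frac{\varphi'f'}{\varphi} + 2(n-1)\lambda$, to eliminate $f''$ wherever it appears; and differentiate \eqref{eq:hescond1}, namely $\frac{\varphi'f'}{\varphi} = J_f^m + (m-n)\lambda$, to relate $\left(\frac{\varphi'f'}{\varphi}\right)'$ to $(J_f^m)'$. From Lemma~\ref{lemma:scale} (or directly from the computation $dJ_f^m = dY_f^m = (\frac1m Y_f^m - \lambda)df$ obtained in the proof of Lemma~\ref{lemma:Main-Expression-1}, using $Y_f^m = J_f^m - n\lambda$ on a weighted Einstein SMMS), I have $(J_f^m)' = \left(\frac1m J_f^m - 2\lambda\right) f'$. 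Substituting \eqref{eq:hescond1} back in, this becomes $(J_f^m)' = \left(\frac{1}{m}\frac{\varphi'f'}{\varphi} + \frac{n-m}{m}\lambda - 2\lambda\right)f' = \left(\frac{\varphi'f'}{m\varphi} + \frac{n-3m}{m}\lambda\right)f'$. On the other hand, differentiating the left-hand side of \eqref{eq:hescond1} directly gives $\left(\frac{\varphi'f'}{\varphi}\right)' = \frac{\varphi''f'}{\varphi} + \frac{\varphi'f''}{\varphi} - \frac{(\varphi')^2 f'}{\varphi^2}$, into which I substitute the expression for $f''$ from \eqref{eq:densityeq1}. Equating the two computations of $(J_f^m)'$ produces a relation involving only $\varphi, \varphi', \varphi'', f'$ and the constants; I expect the $f''$-free form, after clearing $\varphi$, to factor.

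The heart of the argument is the factorization. After the substitutions above, the difference of the two expressions for $\left(\frac{\varphi'f'}{\varphi}\right)'$ should be expressible as a product of $f'$ (which is nonzero since $\nf$ is tangent to $I$ and we work near a regular point, so $f'$ vanishes only on a discrete set by analyticity — Theorem~\ref{th:analytic_solutions} and Remark~\ref{remark:open-dense}) with a factor I expect to be proportional to $\Phi\cdot(\varphi'' + 2\lambda\varphi)$ or to $\Phi$ times something that forces the Einstein condition. Concretely, I anticipate obtaining an identity of the shape $\left((n-1)\frac{\varphi'}{\varphi} + f'\right)\left(\frac{\varphi''}{\varphi} + 2\lambda\right) = 0$ after dividing by $f'$ and simplifying (possibly also invoking \eqref{eq:warpex1} to trade $(\varphi')^2$-terms). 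Wherever $\varphi'' + 2\lambda\varphi \neq 0$ we then get $\Phi = 0$; since $\Phi$ is analytic, if it vanishes on a nonempty open subinterval it vanishes on all of $I$, giving $\varphi = A e^{-f/(n-1)}$ with $A = \varphi \, e^{f/(n-1)}$ constant and positive. Otherwise $\varphi'' = -2\lambda\varphi$ identically on the complementary closed set, and again by analyticity this propagates to all of $I$, so by Remark~\ref{remark:Ricci} the manifold is Einstein.

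The main obstacle I foresee is purely computational bookkeeping: getting the factorization to come out cleanly requires carefully combining all three ODEs, and it is not a priori obvious that the cross-terms cancel to leave exactly the product $\Phi\cdot(\varphi''+2\lambda\varphi)$ rather than some messier quadratic in $\Phi$. If a direct factorization is elusive, the fallback is to set $\psi := \varphi \, e^{f/(n-1)}$ (so that $\Phi = (n-1)\psi'/\psi \cdot$ (something), and $\Phi \equiv 0 \iff \psi$ constant), rewrite \eqref{eq:warpex1}--\eqref{eq:hescond1} and the derived relation in terms of $\psi$ and $\varphi$, and show that $\psi' \not\equiv 0$ forces $\varphi'' = -2\lambda\varphi$; the analyticity argument to globalize the dichotomy on $I$ is then the same. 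I would also double-check the edge behavior near isolated zeros of $f'$, but since $I\times_\varphi N$ arises from the splitting lemma on a neighborhood of a regular point, shrinking $I$ if necessary lets us assume $f' \neq 0$ throughout, which removes that concern entirely.
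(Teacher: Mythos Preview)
Your approach is valid and genuinely different from the paper's. The paper does not manipulate the ODE system \eqref{eq:warpex1}--\eqref{eq:hescond1} at all; instead it computes the single component $\delta_fW_f^m(E_i,\partial_t,E_i)$ directly on the warped product, using $\delta R(E_i,\partial_t,E_i)=(n-1)\frac{\varphi'(\varphi''+2\lambda\varphi)}{\varphi^2}$ and $R(\nabla f,E_i,\partial_t,E_i)=-\frac{\varphi''f'}{\varphi}$ together with \eqref{eq:weighted_divergence_Einstein}, and reads off the factored identity $(\varphi''+2\lambda\varphi)\bigl(\varphi f'+(n-1)\varphi'\bigr)=0$ in one stroke. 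Your route---differentiating \eqref{eq:hescond1} and substituting \eqref{eq:densityeq1}---reaches exactly the same factorization, so your anticipated identity is not elusive and no fallback or use of \eqref{eq:warpex1} is needed. The paper's computation is shorter; yours has the virtue of staying entirely inside the ODE framework of Lemma~\ref{lemma:necsuf-ODE}.

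One arithmetic slip to fix: from $dJ_f^m=(\tfrac1m Y_f^m-\lambda)df$ with $Y_f^m=J_f^m-n\lambda$ you get $(J_f^m)'=\bigl(\tfrac1m J_f^m-\tfrac{m+n}{m}\lambda\bigr)f'$, not $\bigl(\tfrac1m J_f^m-2\lambda\bigr)f'$. After substituting \eqref{eq:hescond1} the correct expression is $(J_f^m)'=\bigl(\tfrac{\varphi'f'}{m\varphi}-2\lambda\bigr)f'$, so your coefficient $\tfrac{n-3m}{m}\lambda$ should be $-2\lambda$. With this correction the $\tfrac{\varphi'(f')^2}{m\varphi}$ terms cancel against those coming from the direct differentiation, and what remains is precisely $\bigl(\tfrac{\varphi''}{\varphi}+2\lambda\bigr)\bigl((n-1)\tfrac{\varphi'}{\varphi}+f'\bigr)=0$. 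Your analyticity argument to pass from the pointwise product to one factor vanishing identically on $I$ is fine (and in fact more explicit than the paper, which leaves that step implicit).
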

\begin{proof}
We adopt the notation in Lemma~\ref{lemma:necsuf-ODE} and keep working in a local orthonormal frame $\mathcal{B}=\{\partial_t,E_2,\dots,E_n\}$. The weighted Einstein condition and the harmonicity of the weighted Weyl tensor guarantee that
\begin{equation}\label{eq:weightdivcomp1}
 	0=\delta_fW_f^m(E_i,\partial_t,E_i)=\delta R(E_i,\partial_t,E_i)-R(\nf,E_i,\partial_t,E_i)+2\lambda f'. \\
\end{equation}
 We will use this to obtain an additional ODE. Firstly, consider the divergence of the Riemann curvature tensor, given by
 $\delta R(X,Y,Z)=(\nabla_Y\rho)(X,Z)-(\nabla_Z\rho)(X,Y)$. On the one hand, $(\nabla_{\partial_t}\rho)(E_i,E_i)=\partial_t(\rho(E_i,E_i))-2\rho(\nabla_{\partial_t}E_i,E_i)$. But $\rho(E_i,E_i)=2(n-1)\lambda$ (see Remark~\ref{remark:Ricci}), so $\partial_t(\rho(E_i,E_i))=0$. Moreover, $g(\nabla_{\partial_t}E_i,E_i)=\frac12\partial_t(g(E_i,E_i))=0$, so $\nabla_{\partial_t}E_i\perp E_i$. Since $\mathcal{B}$ is a basis of eigenvectors for the Ricci operator (see the proof of Lemma~\ref{lemma:local-splitting}), we have $\rho(\nabla_{\partial_t}E_i,E_i)=0$. On the other hand, we use the expression of the connection for a warped product (see \cite{Oneill}) to show that
 \[
 \begin{array}{rcl}
 (\nabla_{E_i}\rho)(\partial_t,E_i)&=&-\rho(\nabla_{E_i} \partial_t,E_i)-\rho(\partial_t,\nabla_{E_i}E_i)\\
\noalign{\medskip}
&=& \frac{\varphi'}{\varphi}\left(\rho(\partial_t,\partial_t)-\rho(E_i,E_i)\right),
\end{array} 
\]
so
\[
 				 \delta R(E_i,\partial_t,E_i)=\frac{\varphi'}{\varphi}(\rho(E_i,E_i)-\rho(\partial_t,\partial_t))=(n-1)\frac{\varphi'(\varphi''+2\lambda\varphi)}{\varphi^2}.
	\]
Additionally, the curvature term $R(\nf, E_i,\partial_t,E_i)$ takes the form $R(\nf, E_i,\partial_t,E_i)=-\frac{\Hes_\varphi(\partial_t,\nf)}{\varphi}=-\frac{\varphi''f'}{\varphi}$. With this, equation \eqref{eq:weightdivcomp1} becomes
\[
 	 \frac{1}{\varphi^2}(\varphi''+2\lambda \varphi)(\varphi f'+(n-1)\varphi')=0.
\]
 Thus, $\varphi''+2\lambda \varphi =0$,  or  $\varphi f'+(n-1)\varphi'=0$. 
 In the first case, the underlying manifold is Einstein (see Remark~\ref{remark:Ricci}). In the  second one, we solve the ODE to get $\varphi(t)=Ae^{-\frac{f(t)}{n-1}}$, for $A\in \mathbb{R}^+$. 
\end{proof}

Lemma~\ref{lemma:two-cases} reduces our study to only two possibilities. We will discuss the Einstein case in detail in Section~\ref{sec:Einstein} and give now the proof of the main local rigidity result.

\bigskip

\noindent{\it Proof of Theorem~\ref{th:main-local-result}.} Let $(M^n,g,f,m,\mu)$ be a SMMS such that $P_f^m=\lambda g$ and $\delta_fW_f^m=0$. By Lemma~\ref{lemma:local-splitting}, around every regular point of $f$ there exists a neighborhood $\mathcal{U}$ which is isometric to a warped product of the form $I\times_\varphi N$, where $I\subset \mathbb{R}$ is an open interval, $\nf$ is tangent to $I$, and $\rho^N=\beta g^N$ for some $\beta\in \mathbb{R}$. Using Lemma~\ref{lemma:two-cases}, we have that either $I\times_\varphi N$ is Einstein and Theorem~\ref{th:main-local-result}--(1) holds, or the warping and density functions are related by $\varphi(t)=Ae^{-\frac{f(t)}{n-1}}$ for some $A\in \mathbb{R}^+$, where $t$ is a coordinate parameterizing $I$ by arc length. Assume that the latter is satisfied. Then, the necessary and sufficient conditions given by Lemma~\ref{lemma:necsuf-ODE} take on a simpler form. Indeed, equation \eqref{eq:warpex1} becomes
\begin{equation}\label{eq:warpex2}
	0=\beta-\frac{A^2}{n-1}e^{\frac{-2f}{n-1}}(2(n-1)^2\lambda+(f')^2-f''),
\end{equation}
while equation \eqref{eq:densityeq1} turns into
\begin{equation}\label{eq:fprimesquared}
	(f')^2=2m(f''-(n-1)\lambda).
\end{equation}
Now, taking the derivative of \eqref{eq:fprimesquared} yields $f^{(3)}=\frac{f'f''}{m}$. Substituting this expression into the derivative of \eqref{eq:warpex2}  and using \eqref{eq:fprimesquared}, we have
\begin{equation}\label{eq:exprder}
	0=\frac{A^2e^{-2f/(n-1)}f'}{m(n-1)^2}(4m(m-n+1)(n-1)\lambda-(4m^2-2mn+n-1)f'').
\end{equation}
Note that the factor $4m^2-2mn+n-1$ vanishes if and only if $m=\frac{1}2$ or $m=\frac{1}2(n-1)$. This results in three cases we need to analyze separately.

\medskip

\noindent\underline{$m\notin\left\{\frac{1}2,\frac{1}2(n-1)\right\}$:} Let $B=\frac{4m(m-n+1)(n-1)\lambda}{4m^2-2mn+n-1}$. Then, from \eqref{eq:exprder} it follows that $f(t)=\frac{B}2t^2+Ct+D$, where $C,D\in \mathbb{R}$. Hence, we have $0=f^{(3)}=\frac{f'f''}{m}$, but $f'\neq 0$, so $f''=B$ must vanish. Since $m>0$ and $n\geq 3$, $B=0$ if and only if $\lambda=0$ or $m=n-1$. 

If $\lambda=0$, from \eqref{eq:fprimesquared}, we have that $f'=0$, which is not possible. Thus, there are no solutions with $m\notin\left\{\frac{1}2,\frac{1}2(n-1)\right\}$ and $\lambda=0$.

If $m=n-1$, then from \eqref{eq:fprimesquared} we deduce $C^2=-2(n-1)^2\lambda$. With this, it follows that $\frac{\varphi''}{\varphi}=-2\lambda$, and the manifold is Einstein (see Remark~\ref{remark:Ricci}).

\medskip

\noindent\underline{$m=\frac{1}2(n-1)$:} From \eqref{eq:exprder}, it follows that $\lambda=0$. Solving \eqref{eq:fprimesquared}, and through a suitable change of the coordinate $t$ (preserving the parameterization by arc length), if needed, we find $f(t)=-(n-1)\log(E t)$, where $E\in \mathbb{R}^+$. Hence, $\varphi(t)=AEt$, so $\varphi''=0$ and the manifold is Einstein.

\medskip

\noindent\underline{$m=\frac{1}2$:} From \eqref{eq:exprder}, it follows that $(2n-3)\lambda=0$ and, since $n\geq 3$, we get that $\lambda=0$. Now, we solve \eqref{eq:fprimesquared} (translating $t$ if needed) to find that $f(t)=-\log(Bt)$, where $B\in \mathbb{R}^+$, and hence $\varphi(t)=A(Bt)^{\frac{1}{n-1}}$. Then, \eqref{eq:warpex2} reduces to $\beta=0$, so the fiber $N$ must be Ricci flat. Finally, a direct computation shows that the condition given by \eqref{eq:hescond1} reduces to $0=\frac{\mu}{4(2n-1)(Bt)^4}$, so $\mu=0$. Hence, a  SMMS of the form $(I\times_\varphi N,g,f,\frac{1}2,0)$ satisfies equations \eqref{eq:warpex1}, \eqref{eq:densityeq1} and \eqref{eq:hescond1} and, moreover, is the only solution whose underlying manifold is not Einstein. This corresponds to Theorem~\ref{th:main-local-result}--(2). \qed

\begin{remark}\label{remark:Einstein-connected-component}
	Note that, by the real analyticity of the metric (see Theorem~\ref{th:analytic_solutions}), if $(M,g)$ is Einstein in an open set, then it is Einstein in the whole connected component containing it. Thus, since we are dealing with connected manifolds, the Einstein behavior around a single regular point is enough to infer that the whole manifold is Einstein with $\rho=2(n-1)\lambda g$, even if the parameters $m$, $\lambda$ and $\mu$ were to coincide with those acceptable in the non-Einstein case given by Example~\ref{ex:not-Einstein}.
\end{remark}

\section{The Einstein case}\label{sec:Einstein}

	Let $(M^n,g,f,m,\mu)$ be a SMMS with $P_f^m=\lambda g$ and $\delta_fW_f^m=0$. By Theorem~\ref{th:main-local-result}, around any regular point $p$ of $f$, $M$ is isometric to a warped product $I\times_\varphi N$, where $N$ is Einstein. We have already shown that the non-Einstein case is heavily restricted, with only one allowed value for the parameters $\lambda$, $m$ and $\mu$ and for the Einstein constant of the fiber $\beta$, with the warping and density functions also fixed up to integration constants. However, the next result shows that if the total space $I\times_\varphi N$ is Einstein (which implies that the connected component of $M$ containing $p$ is Einstein by analyticity), its geometry is more flexible, allowing for solutions to the necessary and sufficient equations \eqref{eq:warpex1}-\eqref{eq:hescond1} for different combinations of parameters and functions. The value $m=1$ is exceptional and is excluded in the statement, although this case also follows with an extra degree of freedom (see Remark~\ref{remark:m=1} below).
	

\begin{theorem}\label{th:Einstein-case} 
	Let $(M^n,g,f,m,\mu)$ be a SMMS with $(M,g)$ Einstein and such that $P_f^m=\lambda g$ and $\delta_fW_f^m=0$, with $m\neq 1$. Then, for each regular point $p$ of $f$, there exists a neighborhood $\mathcal{U}$ of $p$ which is isometric to a warped product $I\times_\varphi N$, where $I\subset \mathbb{R}$ is an open interval, $\rho^N=\beta g^N$, and $f$, $\varphi$, $\beta$ and $\mu$ take the following forms ($t$ is a coordinate parameterizing $I$ by arc length):\black
	\begin{enumerate}
	\item{If $\lambda>0$, then
		\[
		\begin{array}{rcl}
			\varphi (t)&=&c_1\cos(\sqrt{2\lambda}t)+c_2\sin(\sqrt{2\lambda}t), \\
			\noalign{\medskip}
			f(t)&=&-m\log\left(c_3+c_2c_4(\cos(\sqrt{2\lambda}t)-1)-c_1c_4\sin(\sqrt{2\lambda}t)\right),		
		\end{array}
		\]
		and
		\[
			\beta=2 (c_1^2+c_2^2)(n-2)\lambda, \qquad \mu=2(c_1^2c_4^2+2c_2c_3c_4-c_3^2)\lambda,
		\]
		where $c_1,c_3\in \mathbb{R}^+$, $c_2\in \mathbb{R}$, $c_4\in \mathbb{R}^*$.}
	\medskip
	\item{If $\lambda=0$, then
		\[
		\begin{array}{rcl}
			\varphi (t)&=&c_1t+c_2, \\
			\noalign{\medskip}
			f (t)&=&-m\log\left(c_3-c_4(c_1t^2+2c_2t)\right),
		\end{array}
		\]
		and
		\[
			\beta=c_1^2(n-2), \qquad \mu=4c_4(c_1c_3+c_2^2c_4),
		\]
		where $c_1\in\mathbb{R}$, $c_2,c_3\in \mathbb{R}^+$, $c_4\in \mathbb{R}^*$.}
	\medskip
		\item{If $\lambda<0$, then
		\[
		\begin{array}{rcl}
			\varphi (t)&=&c_1e^{\sqrt{-2\lambda}\,t}+c_2e^{-\sqrt{-2\lambda}\,t}, \\
			\noalign{\medskip}
			f(t)&=&-m\log\left(c_3+c_2c_4(e^{-\sqrt{-2\lambda}t}-1)-c_1c_4(e^{\sqrt{-2\lambda}t}-1)\right),
		\end{array}
		\]
		and
		\[
		\beta=8 c_1 c_2(n-2)\lambda, \qquad \mu=-2(2(c_1-c_2)c_3c_4+(c_1+c_2)^2c_4^2+c_3^2)\lambda,
		\]
		where $c_1+c_2\in \mathbb{R}^+$, $c_3\in\mathbb{R}^+$, $c_4\in\mathbb{R}^*$.}
\end{enumerate}
\end{theorem}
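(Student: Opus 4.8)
The plan is to solve the overdetermined system of Lemma~\ref{lemma:necsuf-ODE} under the extra hypothesis that the total space is Einstein, which by Remark~\ref{remark:Ricci} is equivalent to the linear equation $\varphi''=-2\lambda\varphi$. By Theorem~\ref{th:main-local-result} (via Lemma~\ref{lemma:local-splitting}), a neighborhood $\mathcal{U}$ of the regular point $p$ is isometric to a warped product $I\times_\varphi N$ with $\rho^N=\beta g^N$ and $\nf$ tangent to $I$, and equations \eqref{eq:warpex1}-\eqref{eq:hescond1} hold. First I would integrate $\varphi''+2\lambda\varphi=0$: according to the sign of $\lambda$ this gives precisely the three displayed closed forms for $\varphi$ (trigonometric, affine, exponential), with $c_1,c_2$ the two integration constants.

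Next I would read off $\beta$ and $f$. Feeding $\varphi''=-2\lambda\varphi$ into \eqref{eq:warpex1} gives $\beta=(n-2)\bigl((\varphi')^2+2\lambda\varphi^2\bigr)$; differentiating the right-hand side and using $\varphi''=-2\lambda\varphi$ again shows it is constant, and evaluating $(\varphi')^2+2\lambda\varphi^2$ on each of the three forms of $\varphi$ yields the stated values of $\beta$. For $f$, substituting $\varphi''=-2\lambda\varphi$ into \eqref{eq:densityeq1} makes the $2(n-1)\lambda$ terms cancel and leaves $f''-\tfrac1m(f')^2-\tfrac{\varphi'}{\varphi}f'=0$; the substitution $v=e^{-f/m}$ linearizes this to $v''=\tfrac{\varphi'}{\varphi}v'$, whence $v'=a\varphi$ for a constant $a$, which is nonzero since $f$ is non-constant, so $v=a\!\int\!\varphi\,dt+b$. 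Integrating $\varphi$ in each case and renaming $a,b$ into the constants $c_3,c_4$ of the statement gives $f=-m\log v$ in the displayed form; the sign requirements $c_1,c_3\in\mathbb{R}^+$, $c_4\in\mathbb{R}^*$, etc., are exactly what guarantees $\varphi>0$ and $v>0$ on a subinterval $I\ni p$, after a harmless translation of the arc-length coordinate $t$.

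The remaining equation \eqref{eq:hescond1} then determines $\mu$. Rewriting it as $J_f^m=\tfrac{\varphi'f'}{\varphi}+(n-m)\lambda$, I would substitute the explicit data into the formula for $J_f^m$ from Lemma~\ref{lemma:necsuf-ODE}, using $f'=-mv'/v=-ma\varphi/v$, the induced expression for $f''$, $e^{2f/m}=v^{-2}$, the simplification $\beta-(n-2)(\varphi')^2=2(n-2)\lambda\varphi^2$, and $\varphi''=-2\lambda\varphi$. Since $\mu$ enters $J_f^m$ only through the term $m(m-1)v^{-2}\mu$, the equation collapses (after multiplying through by $v^2$) to $m(m-1)\bigl(\mu-g(t)\bigr)=0$ for an explicit function $g$ built from $\varphi$ and $v$. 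As $m\neq1$ this forces $\mu=g(t)$; one then checks $g'\equiv0$ using $v'=a\varphi$ and $\varphi''=-2\lambda\varphi$, and reads off $g$ to obtain the displayed value of $\mu$ in each case. (For $m=1$ the $\mu$-term disappears, \eqref{eq:hescond1} becomes an identity, and $\mu$ is unconstrained --- this is the exceptional case of Remark~\ref{remark:m=1}.) The converse --- that every displayed quadruple $(\varphi,f,\beta,\mu)$ solves \eqref{eq:warpex1}-\eqref{eq:hescond1}, hence yields, over any Einstein $N$ with Einstein constant $\beta$, a weighted Einstein SMMS with weighted harmonic Weyl tensor and Einstein total space --- is immediate because every step above is reversible.

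The main obstacle is this last step: inserting the logarithmic density together with its first two derivatives into the weighted Schouten scalar and verifying that the resulting expression is forced to be a genuine \emph{constant} rather than a function of $t$. This constancy is exactly the assertion that the three ODEs, together with the Einstein condition, are mutually consistent; establishing it requires systematically using $v'=a\varphi$ and $\varphi''=-2\lambda\varphi$ to cancel every $t$-dependent term, while simultaneously bookkeeping the three sign regimes of $\lambda$ and matching the several integration constants to the exact parametrization in the statement. All of this is elementary but calculation-heavy.
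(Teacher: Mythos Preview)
Your proposal is correct and follows essentially the same route as the paper: split locally via Lemma~\ref{lemma:local-splitting}, impose $\varphi''+2\lambda\varphi=0$ from Remark~\ref{remark:Ricci}, then feed the resulting $\varphi$ successively into \eqref{eq:warpex1}, \eqref{eq:densityeq1}, \eqref{eq:hescond1} to determine $\beta$, $f$, and $\mu$. Your explicit linearization of \eqref{eq:densityeq1} via $v=e^{-f/m}$ to obtain $v'=a\varphi$ is a cleaner presentation of the step the paper handles by writing out the nonlinear ODE and stating its solution (and is consistent with the relation $v'=A^{-1}\varphi$ noted later in Remark~\ref{remark:multi-warped}).
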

\begin{proof}
	Take a regular point of $p$ of $f$, and the local splitting as a warped product $I\times_\varphi N$ given by Theorem~\ref{th:main-local-result} around $p$. The weighted Einstein condition and the harmonicity of the weighted Weyl tensor guarantee that the necessary and sufficient equations \eqref{eq:warpex1}-\eqref{eq:hescond1} are satisfied. Moreover, since the underlying manifold is Einstein, the ODE $\varphi''+2\lambda \varphi=0$ must also be satisfied (see Remark~\ref{remark:Ricci}). Note that, by a translation, if needed, we can adjust the coordinate $t$ so that $0\in I$. Solving this equation, we can fix the different forms of $\varphi$, depending on the sign of $\lambda$.  Thus, for $\lambda>0$, $\varphi(t)=c_1\cos(\sqrt{2\lambda}t)+c_2\sin(\sqrt{2\lambda}t)$, where the constants $c_1,c_2$ are given by the data of the corresponding initial value problem. Substituting $\varphi$ into equation \eqref{eq:warpex1} yields
	\[
	0=\beta-2 (c_1^2 + c_2^2) (n-2) \lambda,
	\]
	from where $\beta=2 (c_1^2 + c_2^2) (n-2) \lambda$.
	Now, equation \eqref{eq:densityeq1} imposes that
\[
\begin{array}{rcl}	\small
	0&=&m f''(t) \left(c_2 \sin
		(\sqrt{2\lambda }
		t)+c_1 \cos
		(\sqrt{2\lambda }
		t)\right)\\
		\noalign{\smallskip}
	&&	-\sqrt{2\lambda }\, m f'(t)
		\left(c_2 \cos
		(\sqrt{2\lambda }
		t)-c_1 \sin
		(\sqrt{2\lambda }
		t)\right)\\
		\noalign{\smallskip}
	&&	-f'(t)^2
		\left(c_2 \sin
		(\sqrt{2\lambda }
		t)+c_1 \cos
		(\sqrt{2\lambda }
		t)\right),
	\end{array}
	\]
	from where the form of $f$ is obtained for integration constants $c_3>0$ and $c_4\neq 0$. 

	Lastly, in order to fix the value of $\mu$, we substitute the known values of $\beta$, $\varphi$ and $f$ into the last of the necessary and sufficient equations, which is \eqref{eq:hescond1}:
	\[
		0=\frac{m(m-1)(2(c_1^2c_4^2+2c_2c_3c_4-c_3^2)\lambda-\mu)}{2(n+m-1)(c_1c_4\sin(\sqrt{2\lambda}t)-c_2c_4\cos(\sqrt{2\lambda}t)+c_2c_4-c_3)^2}.
	\]
From this expression we get that $\mu=2(c_1^2c_4^2+2c_2c_3c_4-c_3^2)\lambda$, which concludes the case $\lambda>0$. If $\lambda=0$ or $\lambda<0$ the argument is analogous and we omit details.
\end{proof}

\begin{remark}\label{remark:m=1}
	In the case $m=1$, Theorem~\ref{th:Einstein-case} still holds for the same values of $\varphi$, $f$ and $\beta$. The only difference with the case $m\neq 1$ is that, since the auxiliary curvature parameter $\mu$ does not appear in the definition of any weighted tensors when $m=1$, equations \eqref{eq:warpex1}-\eqref{eq:hescond1}  are satisfied for arbitrary values of $\mu$.
\end{remark}


Note that, for Einstein manifolds, the Riemann curvature tensor decomposes as $R=\frac{\tau}{2n(n-1)}g\KN g+W$. If $(M^n,g,f,m,\mu)$ is an Einstein SMMS with $P_f^m=\lambda g$ and $\delta_fW_f^m=0$, then $\rho=2(n-1)\lambda g$ and $\tau=2n(n-1)\lambda$ (see Remark~\ref{remark:Ricci}), which implies that $W=W^m_f=R-\lambda g\KN g$, i.e. the weighted and unweighted Weyl tensors become equal. Consequently, the following three conditions are equivalent in this context:
\begin{enumerate}
	\item{$M$ has constant sectional curvature.}
	\item{$M$ is locally conformally flat in the usual sense.}
	\item{$M$ is locally conformally flat in the weighted sense.}
\end{enumerate}

Moreover, a warped product $I\times_\varphi N$ is locally conformally flat if and only if $N$ has constant sectional curvature (see \cite{Brozos-Loc-Conf-Flat}). Then, the following rigidity result in low dimensions follows immediately. 

\begin{corollary}\label{cor:4-dim-curv}
	Let $(M^{3,4},g,f,m,\mu)$ be an Einstein SMMS such that $P_f^m=\lambda g$ and $\delta_fW_f^m=0$. Then, $(M,g)$ has constant sectional curvature $2\lambda$.
\end{corollary}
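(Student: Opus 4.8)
The plan is to reduce everything to showing that the ordinary Weyl tensor $W$ vanishes identically on $M$. Indeed, the discussion preceding the statement shows that, for an Einstein SMMS with $P_f^m=\lambda g$ and $\delta_fW_f^m=0$, one has $W=W_f^m=R-\lambda g\KN g$, and that $(M,g)$ having constant sectional curvature is equivalent to local conformal flatness, in either the usual or the weighted sense. Once $W\equiv 0$ is established, the value of the curvature is forced to be $2\lambda$ by $\tau=2n(n-1)\lambda$ (Remark~\ref{remark:Ricci}).

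First I would dispatch the case $n=3$, which is immediate: the Weyl tensor of any three-dimensional Riemannian manifold vanishes identically, so $W=0$ and the equivalence between local conformal flatness and constant sectional curvature gives the claim.

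For $n=4$ I would argue locally around regular points of $f$ and then globalize. Fix a regular point $p$ of $f$; by Theorem~\ref{th:main-local-result} together with Remark~\ref{remark:Einstein-connected-component} (which places us in case (1), since $M$ is assumed Einstein), a neighborhood $\mathcal{U}$ of $p$ is isometric to a warped product $I\times_\varphi N$ with $N$ a three-dimensional Einstein manifold. A three-dimensional Einstein manifold has constant sectional curvature (vanishing Weyl in dimension three, together with Schur's lemma), so the warped-product criterion recalled just before the statement ($I\times_\varphi N$ is locally conformally flat if and only if $N$ has constant sectional curvature, see \cite{Brozos-Loc-Conf-Flat}) yields $W=0$ on $\mathcal{U}$. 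Since the set of regular points of $f$ is open and dense in $M$ (Remark~\ref{remark:open-dense}) and $W$ is continuous (indeed real-analytic, by Theorem~\ref{th:analytic_solutions}), it follows that $W\equiv 0$ on all of $M$, and one concludes as above that $(M,g)$ has constant sectional curvature $2\lambda$.

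There is no genuine obstacle here; the argument is essentially bookkeeping. The only points requiring a moment's care are that the Einstein hypothesis (combined with $\delta_fW_f^m=0$) makes the weighted and unweighted Weyl tensors coincide, so that local conformal flatness in the weighted sense transfers to the ordinary notion, and that it is the density of the regular set of $f$ that upgrades the local vanishing of $W$ near such points to vanishing on the whole connected manifold $M$.
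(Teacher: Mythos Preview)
Your proof is correct and follows essentially the same route as the paper: use the local warped product splitting from Theorem~\ref{th:main-local-result}, observe that the low-dimensional Einstein fiber has constant sectional curvature, invoke the criterion from \cite{Brozos-Loc-Conf-Flat} to get local conformal flatness on the dense regular set, and pass to all of $M$ by continuity. The only cosmetic difference is that you single out $n=3$ and dispatch it directly via the identical vanishing of $W$ in dimension three (which, combined with $W=R-\lambda\, g\KN g$, immediately gives constant sectional curvature $2\lambda$), whereas the paper treats $n=3$ and $n=4$ uniformly through the warped product structure; both are fine, and your shortcut is arguably cleaner.
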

\begin{proof}
By Theorem~\ref{th:main-local-result}, the open dense set $\tilde{M}\subset M$ of regular points of $f$ is locally isometric to a warped product $I\times_\varphi N$ with the fiber $N$ a $2$ or $3$-dimensional Einstein manifold. Hence, $N$ has constant sectional curvature and $\tilde{M}$ is locally conformally flat (see \cite{Brozos-Loc-Conf-Flat}). By the smoothness of the Weyl and Cotton tensors, it follows that $M$ is locally conformally flat. 
\end{proof}

%

	Nevertheless, for $n\geq 5$, Corollary~\ref{cor:4-dim-curv} no longer holds, and there exist Einstein SMMSs which are weighted Einstein and have weighted harmonic Weyl tensor, but are not locally conformally flat. In order to build an example, it suffices to consider a warped product $I\times_\varphi N$ with $N$ Einstein but not locally conformally flat. The following construction illustrates this fact.

\begin{example}\rm
	Let $(M,g)$ be the warped product $I\times_\varphi N$, where $N$ is the Riemannian product $S_1\times S_2$ of two surfaces of constant Gauss curvature $\beta$. Thus, $N$ is Einstein with $\rho^N=\beta g^N$. Choose  local coordinates $(x_1,x_2)$ and $(x_3,x_4)$, respectively, for $S_1$ and $S_2$ and consider the metric of the warped product given by the non-vanishing components
		\[
			\begin{array}{rcll}
 				g(\partial_t,\partial_t)=1, \qquad g(\partial_{x_1},\partial_{x_1})=g(\partial_{x_2},\partial_{x_2})&=&\frac{\varphi(t)^2}{\left(1+\frac{\beta}{4}(x_1^2+x_2^2)\right)^2}, \\
				\noalign{\medskip}
				g(\partial_{x_3},\partial_{x_3})=g(\partial_{x_4},\partial_{x_4})&=&\frac{\varphi(t)^2}{\left(1+\frac{\beta}{4}(x_3^2+x_4^2)\right)^2}.
			\end{array}
	\]
	Now, for $\lambda\in \mathbb{R}$, fix $\varphi(t)$, $f(t)$, $\beta$ and $\mu$ as in Theorem~\ref{th:Einstein-case} (in agreement with the sign of $\lambda$), choosing constants such that $\beta\neq 0$. The SMMS defined by $(I\times_\varphi N,g, f,m,\mu)$ is Einstein, and satisfies $P_m^f=\lambda g$ and $\delta_fW_f^m=0$, but is not of constant sectional curvature. Indeed, up to symmetries, the nonzero components of the usual Weyl tensor (hence also of the weighted Weyl tensor) are
		\[
			\begin{array}{rcl}
 				W(\partial_{x_1},\partial_{x_2},\partial_{x_1},\partial_{x_2})&=&\tfrac{512\, \beta \, \varphi(t)^2}{3\left(4+\beta(x_1^2+x_2^2)\right)^4}, \,\, W(\partial_{x_3},\partial_{x_4},\partial_{x_3},\partial_{x_4})=\tfrac{512\, \beta \, \varphi(t)^2}{3\left(4+\beta(x_3^2+x_4^2)\right)^4}, \\
				\noalign{\medskip}
				W(\partial_{x_i},\partial_{x_j},\partial_{x_i},\partial_{x_j})&=&-\tfrac{256\, \beta \, \varphi(t)^2}{3\left(4+\beta(x_1^2+x_2^2)\right)^2\left(4+\beta(x_3^2+x_4^2)\right)^2}, \quad i=1,2,\, j=3,4.
			\end{array}
	\]
\end{example}


\begin{remark}\label{remark:multi-warped}
Let us consider a SMMS $(M^n,g,f,m,\mu)$ which is Einstein, with $P_f^m=\lambda g$ and $\delta_fW_f^m=0$, and adopt the notation in Theorem~\ref{th:Einstein-case}. Note that, if we make the change of variable $v=e^{-f/m}$ in Theorem~\ref{th:Einstein-case}, we find that the density and warping functions satisfy $v'(t)=A^{-1}\varphi(t)$, where $A\neq 0$ is an integration constant fixed by the initial data.
Thus, the warped product $I\times_{Av'}N$ is Einstein and the conformal metric $ v^{-2} g$ is also Einstein, since $v$ is a solution of $\Hes_v-\frac{1}n \Delta v g=0$ (see \cite{Kuhnel-conformal}). A classical result by Brinkmann \cite{Brinkmann} states that warped products in Theorem~\ref{th:Einstein-case} are characteristic of Einstein metrics that are conformally transformed into Einstein metrics.

Moreover, these warped products have harmonic Weyl tensor in the usual sense (see Remark~\ref{remark:harmonic-Weyl}). In fact, we have that $W_f^m=W$, so the harmonicity condition $\delta_fW_f^m=0$ can be reformulated in terms of $W$ as the condition $\iota_{\nf}W=0$. Also, notice that the divergence of the Weyl tensor is modified by a conformal change $\tilde{g}=e^{-2f}g$ as $\tilde{\delta} \tilde{W}=\delta{W}+(3-n)\iota_{\nf} W$ (see \cite{Kuhnel-conformal}). Hence, given that $W$ is harmonic and  $\iota_{\nf}W=0$, it follows that $\tilde{\delta} \tilde{W}=0$, so we can rephrase the role of the density function by stating that {\it it defines a conformal change of the metric that preserves the harmonicity of the Weyl tensor of $M$}.
	
In terms of the geometric interpretation of weighted objects as Riemannian invariants of the formal warped product  \eqref{eq:formal-warped-product}, we have that its auxiliary manifold becomes a multiply warped product of the form $I\times_{Av'}N\times_{v}F^m(\mu)$. Multiply warped product metrics have been considered in different contexts to obtain examples of manifolds with some curvature features; see, for example, \cite{pacific,Dobarro-Unal} for studies of these metrics related to the Einstein condition, local conformal flatness and negative curvature. Similarly to how quasi-Einstein manifolds can be used as bases to find Einstein warped products (see \cite{Kim-Kim-Warped}), we can use weighted Einstein manifolds with weighted harmonic Weyl tensor in order to find multiply warped products satisfying certain geometric properties. For example, these multiply warped products  have harmonic Weyl tensor. Indeed, notice that a conformal change of the form $g\oplus v^2 g^F\mapsto \frac{1}{v^2} g\oplus g^F$ transforms the warped product into a direct product of two Einstein manifolds, so this product manifold has harmonic Weyl tensor. Since $v$ only depends on $t$ and $\iota_{\partial_t} W=0$, the inverse of the previous conformal change preserves the vanishing of the divergence of the Weyl tensor. This harmonicity can also be proved directly on the multiply warped product by checking the conditions on \cite{Gebarowski}.		
			
Furthermore, although the multiply warped products $I\times_{Av'}N\times_{v}F^m(\mu)$ are not Einstein in general, we can make them so by choosing appropriate constants of integration, namely $c_3=c_2c_4$ (if $\lambda>0$), $c_3=c_4(c_2-c_1)$ (if $\lambda<0$) and $c_1=0$ (if $\lambda=0$), with the Einstein constant being $\tilde{\lambda}=2(n+m-1)\lambda$. A direct computation shows that this is equivalent to taking the scale of the SMMS to be zero, thus making it a quasi-Einstein manifold (see Lemma~\ref{lemma:scale}).
		\end{remark}


\section{Global results}\label{sec:global-results}
Now we turn our attention to global questions and study obstructions to the existence of complete weighted Einstein manifolds with weighted harmonic Weyl tensor. In related contexts, several authors have given results for complete and simply connected quasi-Einstein manifolds (see, for example, \cite[Theorem 1.2.]{He-warped-Einstein}). However, in this weighted setting, we will show that taking advantage of a relation between the weighted Einstein and generalized Obata equations (see \cite{Obata-Sphere,Wu-Ye-Obata}), we can disregard simple connectedness and prove Theorem~\ref{th:complete-global} by imposing only the completeness assumption.

The following two lemmas highlight some properties of both the Einstein and non-Einstein cases which will be key in our proof of Theorem~\ref{th:complete-global}.

\begin{lemma}\label{lemma:Ricci-blowup}
	Let $(I\times_\varphi N,g,f,\frac{1}2,0)$ be a SMMS given as in Example~\ref{ex:not-Einstein}. Then,  $(I\times_\varphi N,g)$ is incomplete, and cannot be isometrically embedded in any complete manifold.
\end{lemma}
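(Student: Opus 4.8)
The plan is to exploit the blow-up of the scalar curvature of $(I\times_\varphi N,g)$ as $t\to 0^{+}$. From Remark~\ref{remark:Ricci} with $\lambda=0$ (note that $P^{1/2}_f=0$ forces $\lambda=0$), the Ricci tensor of the warped product is $\rho(\partial_t,\partial_t)=-(n-1)\varphi''/\varphi$, while $\rho$ vanishes on vectors tangent to $N$; substituting $\varphi(t)=A(Bt)^{1/(n-1)}$ recovers the value recorded in Example~\ref{ex:not-Einstein},
\[
\tau=-(n-1)\frac{\varphi''}{\varphi}=\frac{n-2}{(n-1)\,t^{2}},
\]
which, since $n\ge 3$, tends to $+\infty$ as $t\to 0^{+}$ (the coordinate $t$, with $\|\partial_t\|_g\equiv 1$, ranges over an interval $I\subset\mathbb{R}^{+}$ whose left endpoint is $0$, as in Example~\ref{ex:not-Einstein}). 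In particular $\tau$ is unbounded on $M:=I\times_\varphi N$, and already this shows $(M,g)$ is \emph{incomplete}: the radial curve $\gamma(t)=(t,q)$ for fixed $q\in N$ has unit speed, so $\{\gamma(1/k)\}_k$ is a Cauchy sequence in $M$ with no limit (a limit point would have finite scalar curvature by smoothness, contradicting $\tau(\gamma(1/k))\to+\infty$).

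Next I would run the standard non-extendability argument. Suppose, for contradiction, that there is an isometric embedding $\Phi\colon(M,g)\hookrightarrow(\widetilde M,\widetilde g)$ with $(\widetilde M,\widetilde g)$ complete; the statement being vacuous in positive codimension (by Nash's embedding theorem), we understand $\dim\widetilde M=n$, so that $\Phi$ is an open isometric embedding onto an open set and hence $\widetilde\tau\circ\Phi=\tau$ on $M$. As above, $d_{M}\big(\gamma(s),\gamma(t)\big)\le|s-t|$ for $s,t$ in $I$, so with $t_k\to 0^{+}$ the sequence $\{\gamma(t_k)\}$ is Cauchy in $M$; consequently $\{\Phi(\gamma(t_k))\}$ is Cauchy in $\widetilde M$, because consecutive $\widetilde g$-distances are bounded by the $\widetilde g$-lengths of the arcs $\Phi\circ\gamma$, which equal $|t_j-t_k|$ since $\Phi$ is an isometric immersion. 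By completeness, $\Phi(\gamma(t_k))\to p$ for some $p\in\widetilde M$.

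Finally, continuity of the smooth scalar curvature $\widetilde\tau$ gives $\widetilde\tau\big(\Phi(\gamma(t_k))\big)\to\widetilde\tau(p)<\infty$, whereas $\widetilde\tau\big(\Phi(\gamma(t_k))\big)=\tau(\gamma(t_k))=\frac{n-2}{(n-1)t_k^{2}}\to+\infty$, a contradiction; hence no such $\Phi$ exists. The only genuinely delicate points are the reduction to equal dimension—without it one only controls the intrinsic curvature of the image via the Gauss equation, and Nash's theorem would make the claim false—and the remark that radial curves in a warped product realize the coordinate distance, which makes the Cauchy property immediate. Alternatively, one may run the same argument with the unit sectional curvature $K(\partial_t,E_i)=\frac{n-2}{(n-1)^{2}t^{2}}$ (or with $\rho(\partial_t,\partial_t)$) in place of $\tau$, extracting a limiting unit direction at $p$ by compactness of the unit tangent sphere over a neighborhood of $p$.
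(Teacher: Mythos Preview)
Your proof is correct and follows essentially the same route as the paper's: both exploit the curvature blow-up along the unit-speed radial geodesic $t\mapsto(t,q)$ as $t\to 0^{+}$ (the paper uses $\rho(\partial_t,\partial_t)$, you use $\tau$, but since the Ricci tensor vanishes on the fiber directions these coincide). Your version is in fact more carefully argued than the paper's, which simply invokes ``Ricci blowup'' without spelling out the Cauchy-sequence step; your explicit remark that the embedding must be read as equal-dimensional (lest Nash trivialize the claim) is a point the paper leaves implicit.
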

\begin{proof}

The Ricci operator of $(I\times_\varphi N,g)$ has only one non-zero component:
\[
Ric(\partial_t)=\frac{(n-2)}{(n-1)t^2}\partial_t.
\]
Moreover, $\alpha(t)=t$ is a geodesic, since $\alpha'(t)=\partial_t$ and $\nabla_{\alpha'(t)} \alpha'(t)=\nabla_{\partial_t} \partial_t=0$. Notice that $\rho(\alpha'(t),\alpha'(t))=\frac{(n-2)}{(n-1)t^2}$ on $I$. If the manifold were complete, extending the geodesic would yield $\rho(\alpha'(t),\alpha'(t))=\frac{(n-2)}{(n-1)t^2}$ for $t\in (0,\infty)$. Now, note that
\[
\lim_{t\to0^+} \rho(\alpha'(t),\alpha'(t))=\infty,
\] 
so the manifold exhibits Ricci blowup as $t\rightarrow 0^+$. The result follows. 
\end{proof}

\begin{lemma}\label{eq:weighted-Einstein-Obata}
	Let $(M^n,g,f,m,\mu)$ be a SMMS with $(M,g)$ Einstein and such that $P_f^m=\lambda g$ and $\delta_fW_f^m=0$. Then, the function $v=e^{-\frac{f}m}$ is a solution in $M$ of the generalized Obata equation
	\begin{equation}\label{eq:generalized-Obata-equation}
		\Hes_{v}+f(v)g=0,
	\end{equation}
	with $f(v)=2\lambda v-\kappa$, where $\kappa\in \mathbb{R}$ is the scale of $(M^n,g,f,m,\mu)$.
\end{lemma}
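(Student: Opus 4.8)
The plan is to work around a regular point of $f$, where Lemma~\ref{lemma:local-splitting} and Theorem~\ref{th:main-local-result}--(1) give us a local warped product $I\times_\varphi N$ with $\varphi'' = -2\lambda\varphi$ (Remark~\ref{remark:Ricci}), and where the explicit forms of $\varphi$ and $f$ from Theorem~\ref{th:Einstein-case} are available. Setting $v = e^{-f/m}$, I would first verify that $\Hes_v$ is diagonal in the adapted frame $\{\partial_t, E_2,\dots,E_n\}$: this is immediate because $v$ depends only on $t$, so $\Hes_v(\partial_t,E_i)=0$ (using $\nabla_{\partial_t}E_i \perp E_i$ and the warped product connection formulas), $\Hes_v(\partial_t,\partial_t) = v''$, and $\Hes_v(E_i,E_i) = \frac{\varphi'}{\varphi}v'$ exactly as in the computation of $\Hes_f(E_i,E_i)$ in Lemma~\ref{lemma:necsuf-ODE}. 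So the generalized Obata equation reduces to the two scalar identities $v'' = -f(v)$ and $\frac{\varphi' v'}{\varphi} = -f(v)$ for a single affine function $f(v) = 2\lambda v - \kappa$.

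\textbf{Key steps.} The cleanest route is to observe from Remark~\ref{remark:multi-warped} that $v' = A^{-1}\varphi$ for the integration constant $A$ appearing in $\varphi$; differentiating and using $\varphi'' = -2\lambda\varphi$ gives $v'' = A^{-1}\varphi'' = -2\lambda A^{-1}\varphi = -2\lambda v'\cdot(\varphi/\varphi')$... more directly, $v'' = -2\lambda A^{-1}\varphi = -2\lambda v'/\!\!\phantom{x}$—better to just write $v'' = A^{-1}\varphi''=-2\lambda A^{-1}\varphi$, and since $v'=A^{-1}\varphi$ this is $-2\lambda$ times... no: I would instead note $v'' + 2\lambda\, \tilde v = 0$ where $\tilde v$ is the antiderivative-structure; concretely, integrating $v'' = -2\lambda A^{-1}\varphi$ once (and using $A^{-1}\varphi = v'$) is not what's wanted. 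The honest computation: from the explicit form of $f$ in Theorem~\ref{th:Einstein-case}, $v = e^{-f/m}$ equals exactly the argument of the logarithm, e.g. $v(t) = c_3 + c_2c_4(\cos(\sqrt{2\lambda}t)-1) - c_1c_4\sin(\sqrt{2\lambda}t)$ when $\lambda > 0$, so $v'' = -2\lambda(v - c_3 + c_2c_4) = -2\lambda v + 2\lambda(c_3 - c_2 c_4)$, giving $\Hes_v(\partial_t,\partial_t) + (2\lambda v - \kappa)g(\partial_t,\partial_t) = 0$ with $\kappa = 2\lambda(c_3 - c_2c_4)$. Then I would check the second identity $\frac{\varphi' v'}{\varphi} = -(2\lambda v - \kappa)$ directly from the explicit $\varphi, v$ (or, more conceptually, from equation~\eqref{eq:hescond1} combined with Lemma~\ref{lemma:scale}: $\frac{\varphi' f'}{\varphi} = J_f^m - (n-m)\lambda$ and $J_f^m = (m+n)\lambda - m\kappa e^{f/m}$, together with $v' = -\frac{f'}{m}v$ and $e^{f/m} = 1/v$, yields $\frac{\varphi'v'}{\varphi} = -2\lambda v + \kappa$ after simplification). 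Finally, since $M$ is connected and both $v$ and the metric are real analytic in harmonic coordinates (Theorem~\ref{th:analytic_solutions}), the identity $\Hes_v + (2\lambda v - \kappa)g = 0$, valid on the open dense set of regular points, extends to all of $M$.

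\textbf{Main obstacle.} The bookkeeping obstacle is matching the constant $\kappa$ across the three sign cases of $\lambda$ and confirming it coincides with the scale defined in Lemma~\ref{lemma:scale}; this is where I would invest care, since a sign or constant error propagates. The conceptual route via \eqref{eq:hescond1} and $J_f^m = (m+n)\lambda - m\kappa e^{f/m}$ avoids case-splitting and is the one I would actually write: rewrite $\Hes_v(E_i,E_i) = \frac{\varphi'}{\varphi}v' = v\cdot\frac{\varphi'}{\varphi}\cdot(-\tfrac{f'}{m}) = -\frac{v}{m}\cdot\frac{\varphi'f'}{\varphi}$, substitute \eqref{eq:hescond1} and Lemma~\ref{lemma:scale}, and simplify to $-(2\lambda v - \kappa)$; then handle the $\partial_t\partial_t$ component by differentiating $v' = -\frac{f'}{m}v$ and using the density ODE \eqref{eq:densityeq1} together with $\varphi'' = -2\lambda\varphi$. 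The one subtlety worth flagging is that $v = e^{-f/m}$ is globally defined and smooth on all of $M$ (not just near regular points), so once the Obata equation holds on a dense set it holds everywhere by continuity alone, without even invoking analyticity — though analyticity gives the cleaner statement.
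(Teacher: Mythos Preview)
Your approach is correct, but it is considerably more roundabout than the paper's. The paper bypasses the local warped product structure entirely: it observes the global identity $\Hes_f-\tfrac{1}{m}df\otimes df=-\tfrac{m}{v}\Hes_v$, substitutes $\rho=2(n-1)\lambda g$ (which holds on all of $M$ by Theorem~\ref{th:main-local-result} and analyticity) and $J_f^m=(m+n)\lambda-m\kappa/v$ (Lemma~\ref{lemma:scale}) directly into the tensorial equation $P_f^m=\lambda g$, and reads off $\Hes_v+(2\lambda v-\kappa)g=0$ in two lines. No adapted frame, no component splitting, no case analysis on the sign of $\lambda$, and no density or extension argument is needed, because the computation is global from the start.

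Your route---splitting into the $\partial_t\partial_t$ and $E_iE_i$ components on $I\times_\varphi N$, then using \eqref{eq:hescond1}, \eqref{eq:densityeq1}, Lemma~\ref{lemma:scale}, and $\varphi''=-2\lambda\varphi$---does give the same result, and the conceptual version you describe at the end (avoiding the explicit formulas of Theorem~\ref{th:Einstein-case}) is essentially the paper's computation carried out in local coordinates rather than tensorially. The detour through the explicit forms of Theorem~\ref{th:Einstein-case} and the bookkeeping of $\kappa$ across sign cases that worried you is unnecessary: once you see that $\Hes_f-\tfrac{1}{m}df\otimes df$ rewrites as $-\tfrac{m}{v}\Hes_v$, the weighted Einstein equation itself is already the Obata equation in disguise.
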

\begin{proof}
Since $(M,g)$ is Einstein, by Theorem~\ref{th:main-local-result} we know that $\rho=2(n-1)\lambda g$. Now, by the change of variable $v=e^{-\frac{f}m}$, we have $\Hes_f-\frac{1}m df\otimes df=-\frac{m}v\Hes_v$. Using the scale equation $J_f^m=(m+n)\lambda-m\kappa e^{\frac{f}m}$ (see Lemma~\ref{lemma:scale}), the weighted Einstein equation  $P_f^m=\lambda g$ reads 
\[
\begin{array}{rcl}
\lambda g&=&\frac{1}{n+m-2}\left\{\rho+\Hes_f-\frac{1}m df\otimes df-J_f^m\right\}\\
&=&\frac{1}{n+m-2}\left(-\frac{m}v\Hes_v +((n-m-2)\lambda+\frac{m}v \kappa)g\right),
\end{array}
\]	
from where
\[
 				-m\Hes_v+(m\kappa-2m\lambda v) g=0,
\]
and the result follows.
\end{proof}

Global solutions to the generalized Obata equation have been studied in \cite{Wu-Ye-Obata}. We recall the needed construction from that reference as follows.	Let $f$ be a smooth function defined on an interval $I=(a,b)$, $[a,b)$, $(a,b]$ or $[a,b]$ (with $a$, $b$ possibly infinite), and assume that there exists $\mu\in I$ such that $f(\mu)\neq 0$. Let $u$ be the unique maximally extended solution of the initial value problem
	\[
		u''+f(u)=0, \quad u(0)=\mu, \quad u'(0)=0.
	\]
	Let $T$ be the supremum of $t$ such that $u$ is defined on $[0,t]$, and define the following warped metric on $(0,T)\times S^{n-1}$:
	\begin{equation}\label{eq:mfmu}
		g=dt^2+f(\mu)^{-2}(u')^2g_{S^{n-1}},
	\end{equation}
which extends smoothly through $t=0$ to the Euclidean open ball  $\mathcal{B}_T(0)$. Note that, if $T=\infty$, then $\mathcal{B}_T(0)=\mathbb{R}^n$.  On the other hand, if $T$ is finite, $g$ extends smoothly to $S^n$, where $S^n\setminus\{p,-p\}$ is identified with $(0,T)\times S^{n-1}$. These extensions are denoted by $M_{f,\mu}$. Moreover, taking $v=u(t)$ on $(0,T)\times S^{n-1}$ guarantees that $v$ extends smoothly to $M_{f,\mu}$ (note that $v$ has a critical point at $t=0$ and $v(0)=\mu$) and satisfies the generalized Obata equation \eqref{eq:generalized-Obata-equation}.

\begin{theorem}\cite[Theorem 4.6]{Wu-Ye-Obata} \label{th:generalized-Obata-equation}
	Let $(M^n,g)$ be a connected complete Riemannian manifold admitting a non-constant smooth solution $v$ of the generalized Obata equation \eqref{eq:generalized-Obata-equation} for a smooth function $f$. If $v$ has critical points (at most, it can have two), then $(M,g)$ is isometric to some manifold $M_{f,\mu}$. Otherwise, $(M,g)$ is isometric to a warped product $\mathbb{R}\times_\varphi N$, where $N$ is connected and complete and $v$ is defined on the base of the product.
\end{theorem}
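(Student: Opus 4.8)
The plan is to classify complete manifolds carrying a non-constant solution of $\Hes_v+f(v)g=0$ by exploiting a single structural observation: for \emph{any} unit-speed geodesic $\gamma$, the restriction $u(t)=v(\gamma(t))$ satisfies the autonomous ODE $u''+f(u)=0$. This is immediate, since $\frac{d^2}{dt^2}(v\circ\gamma)=\Hes_v(\gamma',\gamma')=-f(v)$ because $\gamma$ is a geodesic and $g(\gamma',\gamma')=1$. Differentiating $|\nabla v|^2$ with the equation gives $\nabla|\nabla v|^2=-2f(v)\nabla v$, so $|\nabla v|^2=b(v)$ is a function of $v$ alone with $b'=-2f$; this is the first integral (energy) of the ODE and identifies the critical values of $v$ with the zeros of $b$. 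These two facts are the backbone of the whole argument.

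First I would treat the critical points. At a critical point $p$ every radial geodesic $\gamma$ emanating from $p$ gives a solution of $u''+f(u)=0$ with the \emph{same} initial data $u(0)=v(p)$, $u'(0)=g(\nabla v(p),\gamma'(0))=0$; by uniqueness of solutions of this ODE (here $f$ is smooth, hence locally Lipschitz) the value $v(\exp_p(tw))=u(t)$ depends only on $t=\mathrm{dist}(\cdot,p)$, so $v$ is radial about $p$. The same uniqueness rules out $f(v(p))=0$: otherwise $u\equiv v(p)$ would be the solution, forcing $v$ to be locally constant, and an open--closed (connectedness) argument would make $v$ constant on $M$, contradicting the hypothesis. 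Hence each critical point is nondegenerate, $\Hes_v(p)=-f(v(p))g$ with $f(v(p))\neq0$, and in geodesic polar coordinates the metric takes the rotationally symmetric form of the model $M_{f,\mu}$ with $\mu=v(p)$ and warping proportional to $u'$ (indeed the tangential part of the equation forces $\varphi'/\varphi=u''/u'$, so $\varphi\propto u'$).

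Next I would bound the number of critical points and pin down the topology. Starting from a critical point $p$ with $\mu=v(p)$, the radial function $u$ solves the initial value problem of the $M_{f,\mu}$ construction. If $u'$ never vanishes for $t>0$ the warping stays positive, the geodesic spheres remain regular level sets, and completeness yields a single pole, so $M\cong M_{f,\mu}$ with $T=\infty$ (diffeomorphic to $\mathbb{R}^n$) and $v$ has exactly one critical point. If $u'$ first vanishes at a finite $T$, the warping $\varphi$ collapses; smoothness and completeness of the limit force the level set at distance $T$ to shrink to a single second pole $q$, giving $M\cong M_{f,\mu}$ with finite $T$ (diffeomorphic to $S^n$) and exactly the two critical points $p,q$. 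In either case there is no room for a third critical point, which proves the claim ``at most two''. In the remaining case $\nabla v\neq0$ everywhere: the unit field $\nu=\nabla v/|\nabla v|$ has geodesic integral curves and the level sets are totally umbilical (both read off from $\Hes_v=-f(v)g$), so $(M,g)$ is a warped product $dt^2+\varphi(t)^2h$ with $v=v(t)$; completeness makes $t$ range over all of $\mathbb{R}$, connectedness of $M$ forces the fiber $N$ to be connected, and the warped-product completeness criterion makes $(N,h)$ complete, yielding $M\cong\mathbb{R}\times_\varphi N$ with $v$ defined on the base.

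The main obstacle is the smooth closing-up at a pole where the warping vanishes, namely showing that a collapsing family of totally umbilical level sets fills in to a single smooth point rather than to a lower-dimensional singular set, and that the resulting metric is exactly the smooth extension $M_{f,\mu}$. I would handle this with geodesic polar coordinates and the standard smoothness conditions on the warping $\varphi$ at $t=0$ (odd extension with $\varphi(0)=0$, $\varphi'(0)=1$), which here are guaranteed by $\varphi\propto u'$ together with $u'(0)=0$ and $u''(0)=-f(\mu)\neq0$. A secondary technical point is transferring completeness of $M$ to the fiber $N$ in the no-critical-point case and checking that the locally defined warped structure extends globally over the full interval of regular values; both follow from the fact that the normal geodesics are globally defined unit-speed curves along which $v$ is strictly monotone.
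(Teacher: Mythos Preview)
The paper does not prove this theorem at all: it is quoted verbatim from Wu--Ye \cite[Theorem~4.6]{Wu-Ye-Obata} and used as a black box in the proof of Theorem~\ref{th:complete-global}. So there is no ``paper's own proof'' to compare against.

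Your sketch is the standard route to this kind of Obata-type rigidity and is essentially the argument in \cite{Wu-Ye-Obata}. The key ingredients you identify are correct: the restriction of $v$ to any unit-speed geodesic solves $u''+f(u)=0$; the first integral $|\nabla v|^2=b(v)$ with $b'=-2f$; nondegeneracy of critical points because $f(v(p))=0$ would force $v$ constant along every geodesic through $p$ (hence globally, by completeness); and the totally umbilical level sets/geodesic flow of $\nabla v/|\nabla v|$ yielding the warped structure. One small point: your open--closed argument for constancy is unnecessary---once $u\equiv v(p)$ along every geodesic from $p$, completeness and connectedness already give $v\equiv v(p)$ on all of $M$. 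The genuinely delicate step, as you correctly flag, is the smooth closing-up at a pole where $u'$ first vanishes; you would need to argue that the geodesic sphere of radius $T$ is a single point (not merely a lower-dimensional set) and that the warping satisfies the standard pole conditions. You also need, in the rotationally symmetric case, to justify that the induced metric $g_N$ on the fiber sphere is the \emph{round} metric; this follows from comparing the warped form with the $dt^2+t^2 g_{S^{n-1}}+O(t^3)$ expansion of geodesic polar coordinates at the pole, but it should be said explicitly.
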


Now, using the previous results, we are ready to prove the global result characterizing complete Einstein SMMSs with weighted harmonic Weyl tensor.
\medskip

\noindent{\it Proof of Theorem~\ref{th:complete-global}.} Let $(M^n,g,f,m,\mu)$ be a complete SMMS such that $P_f^m=\lambda g$ and $\delta_fW_f^m=0$. By Lemma~\ref{th:main-local-result}, around regular points of $f$, $(M,g)$ is either Einstein or given by Example~\ref{ex:not-Einstein}. If $(M,g)$ is Einstein around any regular point of $f$, then it is Einstein everywhere by analyticity (see Remark~\ref{remark:Einstein-connected-component}). On the other hand, if $(M,g)$ is not Einstein, Lemma~\ref{lemma:Ricci-blowup} guarantees that  Example~\ref{ex:not-Einstein} cannot be isometrically embedded in a complete manifold, so $(M,g)$ cannot be complete. Thus, we  assume that $(M,g)$ is Einstein henceforth. Then, by Lemma~\ref{eq:weighted-Einstein-Obata}, $v=e^{-\frac{f}m}$ satisfies the generalized Obata equation \eqref{eq:generalized-Obata-equation} with $f(v)=2\lambda v-\kappa$.

Firstly, assume that $v$ has critical points. Then, by Theorem~\ref{th:generalized-Obata-equation}, $(M,g)$ is isometric to an $M_{f,\mu}$. Thus, the metric becomes
\[
	g=dt^2+\varphi(t)^2 g_{S^{n-1}}, \quad t\in (0,T),
\] 
with $\varphi(t)=\frac{(v'(t))}{(2\lambda v(0)-\kappa)}$, where 
\[
		v''+2\lambda v-\kappa=0, \quad v(0)=\xi>0, \quad v'(0)=0.
\]
If $\lambda>0$, it follows that $v(t)=\frac{\kappa}{2\lambda}+\frac{(2 \xi\lambda-\kappa)\cos(\sqrt{2\lambda}t)}{2\lambda}$, with $t\in (0,\frac{\pi}{\sqrt{2\lambda}})$. The warping function is $\varphi(t)=\frac{\sin \left(\sqrt{2\lambda } t\right)}{\sqrt{2\lambda }}$. Hence, $(M,g)$ is isometric to a sphere of constant sectional curvature $2\lambda$. By imposing the weighted Einstein equation, we obtain that $\mu=2 \xi (\xi \lambda -\kappa )$ or  $m=1$  and, therefore, $(M,g,f,m,\mu)$ and $(M,g,f,1)$ are identified with the $m$-weighted $n$-sphere as in Example~\ref{ex:sphere}. This is Theorem~\ref{th:complete-global}--(1). On the other hand, if $\lambda=0$, $\kappa\neq 0$, then $v(t)=\xi+\frac{\kappa}2 t^2$ with $t\in (0,\infty)$; the warping function is $\varphi(t)=t$ and $\mu=-2 \xi\kappa$, so $(M,g,f,m,\mu)$ is identified with the $m$-weighted $n$-Euclidean space as in Example~\ref{ex:euclidean-space}. This is Theorem~\ref{th:complete-global}--(2). Finally,  if $\lambda<0$, we have $v(t)=\frac{\kappa }{2 \lambda }+\frac{(2 \xi \lambda -\kappa) \cosh \left(\sqrt{-2\lambda }t\right)}{2 \lambda }$, with $t\in (0,\infty)$. The warping function is given by $\varphi(t)=\frac{\sinh \left(\sqrt{-2\lambda } t\right)}{\sqrt{-2\lambda }}$. Similarly to the first case, the weighted Einstein equation yields $\mu=2 \xi (\xi \lambda -\kappa )$ and $(M,g,f,m,\mu)$ is identified with the $m$-weighted $n$-hyperbolic space as in Example~\ref{ex:hyperbolic-space}. This is Theorem~\ref{th:complete-global}--(3.a).

Lastly, consider all remaining cases, where it is assumed that $v$ has no critical points. Then, by Theorem~\ref{th:generalized-Obata-equation}, $(M,g)$ splits globally as a warped product $\mathbb{R}\times_\varphi N$ where $N$ is complete. Thus, the forms of the warping and density functions given by Theorem~\ref{th:Einstein-case} for the Einstein case can be taken to be global. For $\lambda \geq 0$, these density functions either present critical points (so they correspond to a local description of one of the previous examples) or are such that $v=e^{-\frac{f}m}$ turns nonpositive for some values of a coordinate $t$ parameterizing $\mathbb{R}$ by arc length (so they result in incomplete manifolds). Hence, let us focus on the case $\lambda<0$. The form of the warping function is $\varphi (t)=c_1e^{\sqrt{-2\lambda}\,t}+c_2e^{-\sqrt{-2\lambda}\,t}$ as in Theorem~\ref{th:Einstein-case}--(2). For $\varphi$ to stay positive for all $t\in\mathbb{R}$, $c_1$ and $c_2$ must be nonnegative. Note that this also prevents $v(t)=c_3+c_2c_4(e^{-\sqrt{-2\lambda}t}-1)-c_1c_4(e^{\sqrt{-2\lambda}t}-1)$ from presenting critical points. In addition, $v$ must remain positive for all $t\in \mathbb{R}$. Assume first that $c_1,c_2>0$, then $v$ turns nonpositive for large enough values of $t$ if $c_4>0$, and for small enough values of $t$ if $c_4<0$, so this case is not admissible. Hence, either $c_1>0$ and $c_2=0$, or $c_1=0$ and $c_2>0$. Notice that a reparametrization of the form $t\to-t$ together with a change $c_4\to -c_4$ interchange $c_1$ and $c_2$, so we can assume $c_2=0$ and $v(t)=c_3-c_1c_4(e^{\sqrt{-2\lambda}t}-1)$.   Thus, $v$ remains positive if and only if $c_4<0$ and $-c_1c_4\leq c_3$.
	
	Now, let $A=c_1$, $B=c_3$ and $C=-c_4$. It follows from Theorem~\ref{th:Einstein-case}--(2) that $\beta=0$ (hence $N$ is Ricci flat),
	\[
	 	\varphi (t)=A e^{\sqrt{-2\lambda}\,t}, \qquad f(t)=-m\log\left(B+ AC( e^{\sqrt{-2\lambda}\,t}-1)\right),
	 \]
	 and, moreover, either $m=1$ (see Remark~\ref{remark:m=1}) or $\mu=-2(B-AC)^2\lambda$, with $A,B,C\in \mathbb{R}^+$ and $AC\leq B$. This is the remaining case, Theorem~\ref{th:complete-global}--(2.b).
	 \qed
\begin{remark}\rm
Notice that, if $\lambda\geq 0$, the weighted space forms in Examples~\ref{ex:sphere} and \ref{ex:euclidean-space} are the only complete SMMSs which are weighted Einstein and have weighted harmonic Weyl tensor. In contrast, if $\lambda<0$, there are two families of examples, namely those given in Theorem~\ref{th:complete-global}--(3.a) and (3.b). If the dimension is $n\leq 4$, both of these latter examples have an underlying manifold of negative constant sectional curvature (see Corollary~\ref{cor:4-dim-curv}). Nevertheless, these two SMMSs are not identified with each other, indeed the density function has one critical point in the $m$-weighted $n$-hyperbolic space, but has no critical points in Theorem~\ref{th:complete-global}--(3.b).   

For $n\geq 5$, any complete Ricci flat manifold (non-flat) $N$ give rise to a complete SMMS with the construction in Theorem~\ref{th:complete-global}--(3.b). Moreover, the underlying Riemannian manifold does not have constant sectional curvature.
\end{remark}

\bigskip

%


\end{document}